\ifdefined\pdfoutput
  \pdfoutput=1
\fi
\documentclass[10pt]{article}
\usepackage[T1]{fontenc}
\usepackage{lmodern}
\usepackage[a4paper,margin=1in]{geometry}
\usepackage{amsmath,amssymb,amsthm,mathtools}
\usepackage[english]{babel}
\usepackage{microtype}
\usepackage[numbers,sort&compress]{natbib}
\usepackage{parskip}
\usepackage{booktabs}
\usepackage{xcolor}
\usepackage[colorlinks=true,linkcolor=blue!55!black,citecolor=blue!55!black,urlcolor=blue!55!black]{hyperref}
\usepackage[nameinlink,capitalise]{cleveref}
\newtheorem{theorem}{Theorem}[section]
\newtheorem{lemma}[theorem]{Lemma}
\newtheorem{proposition}[theorem]{Proposition}
\newtheorem{corollary}[theorem]{Corollary}
\theoremstyle{definition}
\newtheorem{remark}[theorem]{Remark}

\crefname{theorem}{Theorem}{Theorems}
\crefname{lemma}{Lemma}{Lemmas}
\crefname{proposition}{Proposition}{Propositions}
\crefname{corollary}{Corollary}{Corollaries}
\crefname{remark}{Remark}{Remarks}
\crefname{section}{\S}{Sections}

\AddToHook{env/theorem/begin}{\crefalias{section}{theorem}}
\AddToHook{env/lemma/begin}{\crefalias{theorem}{lemma}}
\AddToHook{env/proposition/begin}{\crefalias{theorem}{proposition}}
\AddToHook{env/corollary/begin}{\crefalias{theorem}{corollary}}
\AddToHook{env/remark/begin}{\crefalias{theorem}{remark}}

\newcommand{\Z}{\mathbb Z}
\newcommand{\floor}[1]{\left\lfloor #1\right\rfloor}
\newcommand{\bitand}{\mathbin{\&}}
\newcommand{\AND}{\operatorname{AND}}
\newcommand{\HW}{\operatorname{HW}}
\newcommand{\dotminus}{\mathbin{\dot{-}}}

\title{Short Recurrences and Straight-Line Programs for Primes}
\author{Joseph M. Shunia\thanks{Independent researcher, Ann Arbor, Michigan, United States. Email: \texttt{jshunia@gmail.com}.},
Mihai Prunescu\thanks{Research Center for Logic, Optimization and Security (LOS) (Faculty of Mathematics and Computer Science, University of Bucharest, Academiei 14, Bucharest (RO-010014), Romania), e-mail address: \texttt{mihai.prunescu@gmail.com}.} \thanks{ Simion Stoilow Institute of Mathematics of the Romanian Academy (Research unit 5, P. O. Box 1-764, Bucharest (RO-014700), Romania), e-mail address: \texttt{mihai.prunescu@imar.ro}.}}
\date{October 2026}

\hypersetup{
  pdftitle={Short Recurrences and Straight-Line Programs for Primes},
  pdfauthor={Joseph M. Shunia, Mihai Prunescu},
  pdfsubject={Short recurrences and straight-line programs for primes},
  pdfkeywords={prime recurrence, straight-line program, arithmetic term, binary encoding, packed sieve, n-th prime, 2-adic valuation, bitwise AND}
}

\begin{document}
\maketitle

\begin{abstract}\noindent
We give a short autonomous recurrence that generates every odd prime,
in increasing order, from the single initial value $8$. Its integer
state determines both its output and its next update: the lowest set bit
gives the next prime, while the highest set bit determines the sieve
period. The same state also determines the following prime gap. A related
recurrence represents the prime-counting function as the $2$-adic
valuation of a single integer.
\\[2mm] \noindent
Independently, a direct packing of a finite divisor sieve yields fixed
arithmetic constructions for the $n$-th prime, the prime-counting
function, and the twin-prime counting function. We express these
constructions as straight-line programs using addition, truncated
subtraction, multiplication, integer division, exponentiation, and
remainder. Counting operations with reuse of computed values gives
programs of lengths $166$, $132$, and $167$, respectively; one autonomous
recurrence update uses $99$ operations. Explicit arithmetic subroutines
remove all auxiliary operations, including bitwise AND. The programs
unfold into fixed arithmetic terms without variable-length sums,
products, or searches.
\end{abstract}

\begingroup\small
\noindent\textbf{2020 Mathematics Subject Classification:}
11A41 (primary), 11B37, 11A07 (secondary).\par
\noindent\textbf{Keywords:} prime recurrence, straight-line program, arithmetic term,
binary encoding, packed sieve, $n$-th prime,
$2$-adic valuation, bitwise AND.\par
\endgroup

\section{Introduction}
\label{sec:intro}

Starting from $T_0=8$, consider the recurrence
\begin{equation}
\label{eq:integer}
T_{k+1}
=
T_k\frac{2^{(\floor{\log_2 T_k}-1)\nu_2(T_k)}-1}
         {2^{\floor{\log_2 T_k}-1}-1}
-
\left(
T_k\frac{2^{(\floor{\log_2 T_k}-1)\nu_2(T_k)}-1}
         {2^{\floor{\log_2 T_k}-1}-1}
\;\bitand\;
\frac{2^{(\floor{\log_2 T_k}-1)\nu_2(T_k)}-1}
     {2^{\nu_2(T_k)}-1}
\right).
\end{equation}
Here $\nu_2(x)$ is the exponent of $2$ dividing a positive integer
$x$, and $\bitand$ denotes bitwise AND. Every quantity on the
right-hand side is determined by $T_k$ alone.

The first update gives
\[
T_1
=
8\frac{2^6-1}{2^2-1}
-
\left(
8\frac{2^6-1}{2^2-1}
\;\bitand\;
\frac{2^6-1}{2^3-1}
\right)
=168-(168\bitand9)=160.
\]
The next gives $T_2=2\,693\,408\,896$. The states grow rapidly,
but their $2$-adic valuations follow a familiar pattern:
\[
\nu_2(T_0),\nu_2(T_1),\nu_2(T_2),\ldots
=3,5,7,11,13,17,19,23,29,31,\ldots.
\]
The recurrence generates every odd prime, in order.

\begin{theorem}[One integer, successive primes]
\label{thm:main}
The recurrence \eqref{eq:integer} is well defined for every
$k\geq0$, and all divisions are exact. If $p_n$ denotes the
$n$-th prime, with $p_1=2$, then
\begin{equation}
\label{eq:valuation}
\nu_2(T_k)=p_{k+2}.
\end{equation}
\end{theorem}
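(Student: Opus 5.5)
The plan is to read $T_k$ as a bit-packed sieve and prove an explicit closed form for it by induction. Write $P_k=p_1p_2\cdots p_{k+1}$ for the primorial, so $P_0=2$, $P_1=6$, $P_2=30$. The claimed invariant is
\[
T_k=\sum_{\substack{2\le m\le P_k+1\\ \gcd(m,P_k)=1}} 2^m .
\]
For $k=0$ the only such $m$ is $3$, so the sum is $8=T_0$.

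Once the invariant is established, the valuation claim is immediate.
\begin{itemize}
\item $\nu_2(T_k)$ is the smallest $m\ge2$ coprime to $P_k$. This is the least prime not dividing $P_k$, namely $p_{k+2}$.
\item Since $\gcd(P_k+1,P_k)=1$, the highest set bit is $P_k+1$. Hence $\floor{\log_2 T_k}-1=P_k$.
\end{itemize}

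For the inductive step, put $p=p_{k+2}$ and $P=P_k$, and assume the invariant for $T_k$. The first factor is exactly the geometric sum
\[
\frac{2^{Pp}-1}{2^{P}-1}=\sum_{j=0}^{p-1}2^{Pj},
\]
which is an integer. Multiplying $T_k$ by it superimposes $p$ copies of the bit pattern, shifted by $0,P,\dots,(p-1)P$.

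The $j$-th copy occupies positions in $[2+jP,\,P+1+jP]$. These intervals are pairwise disjoint, so no carries occur. Coprimality to $P$ is invariant under shifting by $P$. Hence the product equals $\sum 2^m$ over all $m\in[2,pP+1]$ with $\gcd(m,P)=1$.

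The mask is
\[
\frac{2^{Pp}-1}{2^{p}-1}=\sum_{i=0}^{P-1}2^{pi},
\]
again an exact division. Its set bits are the multiples of $p$ in $[0,p(P-1)]$.

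The AND therefore selects the multiples of $p$ that are coprime to $P$ and lie in $[2,pP+1]$. The only multiple of $p$ in that interval lying above $p(P-1)$ is $pP$, which is not coprime to $P$. So the mask catches every such multiple. The AND is a sub-pattern of the product, so the subtraction simply deletes those bits.

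What remains is the set of $m\in[2,pP+1]$ coprime to $pP=P_{k+1}$. This is exactly the invariant for $T_{k+1}$.

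The main care points are bookkeeping rather than depth.
\begin{itemize}
\item Identify $\floor{\log_2 T_k}-1$ with $P_k$ via the top bit $P_k+1$.
\item Check the copies do not overlap. The gap between consecutive copies is exactly one empty slot of width $P$, with the copies occupying $[2,P+1]$ offset by multiples of $P$.
\item Check the mask range $i\le P-1$ covers all relevant multiples of $p$, with the single boundary case $pP$ handled by non-coprimality.
\end{itemize}
Well-definedness also follows from the invariant. We have $T_k>0$ and $P_k\ge2$, so both denominators $2^{P}-1$ and $2^{p}-1$ are nonzero and both quotients are exact geometric sums.
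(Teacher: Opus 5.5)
Your proposal is correct and follows the paper's proof essentially step for step: the same primorial-wheel invariant with top bit $P_k+1$, the same no-carry argument for the $p$ translated copies, and the same observation that the mask misses only position $pP$, which is already absent because it is not coprime to $P$. One small wording slip: the copies $[2+jP,\,P+1+jP]$ are adjacent and tile $[2,pP+1]$, not separated by an empty slot of width $P$, but your displayed intervals already state this correctly.
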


No external prime list or separate counter is needed to advance
the recurrence. The state is \emph{self-describing}: its lowest
set bit identifies the next prime, while its highest set bit
determines the scale of the next update. It also reveals the
following prime gap. For $k\geq1$, the two lowest set bits occur
at consecutive prime positions, giving
\[
p_{k+3}-p_{k+2}
=\nu_2\!\left(\frac{T_k}{2^{\nu_2(T_k)}}-1\right).
\]
Thus dividing out the largest power of $2$ and subtracting $1$
turns the prime output into a prime-gap output
(\cref{cor:state-gap}).

The mechanism becomes visible in the binary expansion of $T_k$,
which encodes a finite reduced-residue pattern. The geometric
quotients in \eqref{eq:integer} repeat that pattern and mark
multiples of the current prime; bitwise AND and subtraction
remove the marked positions. Section~\ref{sec:integer-proof}
establishes this invariant and proves \cref{thm:main}.
It also identifies the encoded period as a primorial
(\cref{cor:state-period}) and derives a related recurrence,
indexed by the ordinary integers, whose state $Z_x$ satisfies
$\nu_2(Z_x)=\pi(x)$ (\cref{thm:pi-recurrence}). Here
$\pi(x)$ counts the primes not exceeding $x$. The same state
also counts twin candidates exactly: for $k\geq1$, a single
AND of $T_k$ with a shift of itself has Hamming weight
$\prod_{j=2}^{k+1}(p_j-2)$ (\cref{thm:twin-candidates}).

The second main construction returns $p_n$ directly from $n$.
It encodes a finite divisor sieve in a fixed straight-line
program, then selects the required prime by counting binary
carries. A shorter specialization returns $\pi(n)$ without the
rank-selection step, and one further AND of its packed prime
flags with a shift of themselves gives a fixed term for the
twin-prime counting function (\cref{thm:twin-packed}). These
constructions are independent of the recurrence and appear in
\cref{sec:nth-term}.
Finally, \cref{thm:and-term} in \cref{app:additional-terms}
expresses bitwise AND by an arithmetic term. Together with known terms for the remaining
auxiliary operations, this gives purely arithmetic forms of
the recurrences and fixed arithmetic terms for both $p_n$
and $\pi(n)$.

\paragraph{Related work.}
Formulas and recurrences for the primes have a long history.
Willans~\cite{Willans} gave formulas for $p_n$ built from
Wilson's theorem, whose length grows with $n$ through bounded
sums. Mills~\cite{Mills1947} proved that there is a real
constant $A$ such that $\floor{A^{3^k}}$ is prime for every
$k\geq1$. Fridman, Garbulsky, Glecer, Grime, and Tron
Florentin~\cite{Fridman2019} gave a constant $f_1$ for which
the recurrence
$f_{n+1}=\floor{f_n}\bigl(f_n-\floor{f_n}+1\bigr)$ satisfies
$\floor{f_n}=p_n$. In both cases the constant must be computed
to high precision, which in effect requires the primes in
advance. Rowland~\cite{Rowland} gave the recurrence
$a(n)=a(n-1)+\gcd(n,a(n-1))$, whose first differences are
all $1$ or prime; the primes appear with repetitions and
interspersed with $1$s.

By contrast, \eqref{eq:integer} uses only exact integer
arithmetic, starts from the integer $8$, and needs no
precomputed constant. It is autonomous and self-describing,
and it outputs each odd prime exactly once, in increasing
order. The price is size. By \cref{cor:state-period}, the
bit length of $T_k$ is $p_1p_2\cdots p_{k+1}+2$, and $T_k$ is
a binary copy of a primorial wheel, the reduced-residue cycle
whose gap structure is studied by Holt and
Rudd~\cite{HoltRudd2014}. The arithmetic-term background for
the second construction is recalled in \cref{sec:arith-context}.

\subsection{Straight-line programs and arithmetic terms}
\label{sec:arith-context}

We use the counting convention of Dumitru and Prunescu
\cite{DumitruPrunescuPell}. Our primitive operations on
$\mathbb N=\{0,1,2,\ldots\}$ are
\[
a+b,\qquad a\mathbin{\dot{-}}b=\max(a-b,0),\qquad ab,\qquad
\floor{a/b},\qquad a^b,\qquad a\bmod b,
\]
with positive divisors and moduli. Each displayed primitive costs one
operation, including a multiplication by a fixed constant and an
exponentiation with fixed exponent. Constants, assignments, and reuse of
already computed values cost nothing. A straight-line program is a fixed
finite list of such assignments, without a loop or an input-dependent
branch. Several operations may be grouped in one displayed assignment;
our counts charge every primitive in that assignment. A named subroutine
means insertion of its fixed instruction list, with local registers.
All counts are upper bounds supplied by the displayed programs, not claims
of optimality.

The counts therefore measure arithmetic circuits with shared intermediate
values. An arithmetic term is obtained by substituting every assignment
into its later uses; this can duplicate subexpressions. Remainder can also
be removed through
\[
a\bmod b=a\mathbin{\dot{-}}b\floor{a/b},
\]
which uses three operations in the five-operation arithmetic-term basis.
Every subtraction used for a numerical identity below is nonnegative on
its stated domain. When convenient, we write ordinary subtraction for
these exact instances of truncated subtraction.

One application of the recurrence is a straight-line program. Iterating
it to produce arbitrarily many primes is a loop, and is not itself a
single fixed straight-line program. In contrast, the direct programs in
Section~\ref{sec:nth-term} take $n$ as input and have a number of
instructions independent of $n$. Exponentiation is one arithmetic
operation in this convention; the cost of manipulating its potentially
huge output is considered separately from program length.

Mazzanti~\cite{Mazzanti} proved that every Kalm\'ar elementary function can
be represented by an arithmetic term; see also Marchenkov's work on
superposition bases~\cite{Marchenkov,Marchenkov2007}. Since $n\mapsto p_n$
is Kalm\'ar elementary, an arithmetic term for $p_n$ exists in principle.
Mazzanti's general method, however, counts solutions of Diophantine
descriptions by a hypercube construction, and the resulting terms are
extremely long. The interest therefore lies in explicit, comparatively short
constructions. Such terms are known for a range of number-theoretic
functions including $\nu_2$ and
$\floor{\log_2}$~\cite{PrunescuSaurasNumberTheory}. At the level of the whole Kalm\'ar
class, addition, remainder, and base-two exponentiation already suffice
under substitution~\cite{PrunescuSaurasShuniaBasis}. For the primes,
Prunescu and Shunia constructed explicit terms for $p_n$ and $\pi(n)$ by
applying the hypercube method to a number-theoretic
predicate~\cite{PrunescuShuniaPrime}, and asked whether a simpler
construction avoiding that method exists. The packed sieve below gives one;
the two approaches are compared at the end of \cref{sec:nth-strict}.

The arithmetic subroutines used in our operation counts are displayed
in Appendix~\ref{sec:slp-aux}. They represent factorial, binomial
coefficient, greatest common divisor, valuation, Hamming weight, binary
logarithm, and bitwise AND in the same six-operation basis. The underlying
formulas are due to
\cite{PrunescuSaurasFactorial,Robinson1952,PrunescuShuniaGCD,PrunescuSaurasNumberTheory}
or proved in the appendix. We write $\HW(a)$ for the number of ones in the
binary expansion of $a$, and $\AND(a,b)$ for binary bitwise conjunction.
These names are subroutine calls, not additional unit-cost primitives. Here we show only four examples:
\begin{align*}
\gcd(a,b) &= \left ( \left \lfloor{\frac{5^{ab(ab + a+b)}}{(5^{a^2b}-1)(5^{b^2a}-1)}} \right \rfloor \bmod 5^{ab} \right ) \dot{-} 1, \\
\nu_2(n) &= \left \lfloor{\frac{\gcd(n, 2^n)^{n+1} \bmod (2^{n+1}-1)^2}{2^{n+1}-1}} \right \rfloor , \\
\binom{n}{k} &= \left \lfloor{\frac{(2^n+1)^n}{2^{nk}}} \right \rfloor \bmod 2^n , \\
n! &= \floor{ (8^{n^2})^n \Big/ \binom{8^{n^2}}{n} } , \\
\HW(n) &= \nu_2 \left( \binom{2n}{n} \right).
\end{align*}

The constructions and their sieve invariants are developed in the main
text. Appendix~\ref{app:additional-terms} supplies the arithmetic
subroutines, and Appendix~\ref{app:programs} gives the complete register
schedules with shared intermediate values. Their lengths are summarized
in Table~\ref{tab:program-lengths}.

\begin{table}[htbp]
\centering
\begin{tabular}{@{}llr@{}}
\toprule
Input & Output & Operations\\
\midrule
$T_k$ & $T_{k+1}$ and $p_{k+2}$ & $99$\\
$(x,Z_x)$ & $Z_{x+1}$ & $129$\\
$n\geq1$ & $p_n$ & $166$\\
$n\geq1$ & $\pi(n)$ & $132$\\
$n\geq1$ & $\pi_2(n)$ & $167$\\
\bottomrule
\end{tabular}
\caption{Lengths of the displayed straight-line programs in the
six-operation basis. All auxiliary calls are expanded; constants and
reuse are free. Here $\pi_2(n)$ counts pairs $(p,p+2)$ with $p+2\leq n$.}
\label{tab:program-lengths}
\end{table}

Table~\ref{tab:program-lengths} summarizes the explicit instruction
counts established by the schedules in Appendix~\ref{app:programs}. They use the arithmetic subroutines of
Appendix~\ref{sec:slp-aux}; no auxiliary operation is counted as a
unit-cost primitive. The first two rows concern one recurrence update.
The last three are fixed programs on the single input $n$.

We use geometric sums with a \emph{number-of-terms} parameter:
\begin{equation}
\label{eq:twin-packed-geometric-terms}
\begin{aligned}
G_0(x,m)&=\sum_{j=0}^{m-1}x^j=\frac{x^m-1}{x-1},\\
G_1(x,m)&=\sum_{j=0}^{m-1}jx^j
=\frac{\bigl((m-1)x^{m+1}+x\bigr)-mx^m}{(x-1)^2}.
\end{aligned}
\end{equation}
Here $x\geq2$, with $m\geq0$ for $G_0$ and $m\geq1$ for $G_1$.
In particular, $G_0(x,0)=0$. Both quotients are exact. The grouping in the
numerator of $G_1$ ensures nonnegative subtraction. The sums describe the
outputs; their evaluation uses only the displayed closed forms.
The packed program in Appendix~\ref{sec:packed-slp} reuses powers
already computed for its other registers instead of evaluating each occurrence of $G_0,G_1$ separately.

\section{Proof of the integer recurrence}
\label{sec:integer-proof}

To make the structure of \eqref{eq:integer} explicit, fix $k\geq0$
and introduce auxiliary notation for a single update:
\begin{equation}
\label{eq:parameters}
L=\floor{\log_2 T_k}-1,\qquad
M=2^{L\nu_2(T_k)}-1,\qquad
U=T_k\frac{M}{2^L-1},\qquad
V=\frac{M}{2^{\nu_2(T_k)}-1}.
\end{equation}
The recurrence then takes the equivalent form
\begin{equation}
\label{eq:parameters_integer}
T_{k+1}=U-(U\bitand V).
\end{equation}
These quantities are recomputed from $T_k$ at each step;
the state remains a single integer. Bitwise AND need not be
admitted as a primitive: with the arithmetic term $\AND$ of
\cref{thm:and-term}, proved in \cref{app:additional-terms}, the
update becomes
\begin{equation}
\label{eq:integer-arithmetic}
T_{k+1}=U-\AND(U,V).
\end{equation}

These shared quantities give a fixed straight-line update. Expanding its
arithmetic subroutines and retaining values already computed gives the
$99$-operation schedule in Appendix~\ref{sec:slp-update}. The proof below
identifies the binary pattern carried by the state and explains why the
update advances from one prime to the next.

Let $p_n$ denote the $n$-th prime, and define
\begin{equation}
\label{eq:Lk}
\Pi_k=\prod_{j=2}^{k+1}p_j,\qquad L_k=2\Pi_k,
\end{equation}
where an empty product is $1$, so $\Pi_0=1$ and $L_0=2$.
These quantities describe the invariant used in the proof;
they are not additional inputs to the recurrence.

\begin{lemma}[The binary invariant]
\label{lem:invariant}
For each $k\geq0$, put
\begin{equation}
\label{eq:wheel-state}
\widetilde T_k
=\sum_{\substack{2\leq m\leq L_k+1\\\gcd(m,L_k)=1}}2^m.
\end{equation}
Then
\begin{equation}
\label{eq:self}
\nu_2(\widetilde T_k)=p_{k+2},\qquad
\floor{\log_2\widetilde T_k}-1=L_k.
\end{equation}
\end{lemma}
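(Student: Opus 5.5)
The lemma concerns a single explicit integer, so the plan is to read off its lowest and highest set bits directly from the index set
\[
S_k=\{\,m:\ 2\le m\le L_k+1,\ \gcd(m,L_k)=1\,\}.
\]
Since $\widetilde T_k$ is a sum of distinct powers of two, $\nu_2(\widetilde T_k)=\min S_k$ and $\floor{\log_2\widetilde T_k}=\max S_k$. Both claims in \eqref{eq:self} therefore reduce to identifying the smallest and largest elements of $S_k$.

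For the largest element, I will show $L_k+1\in S_k$. This holds because $\gcd(L_k+1,L_k)=1$, and because $L_k+1\ge 3\ge 2$, since $L_k=2\Pi_k\ge2$. As $L_k+1$ is the upper endpoint of the range, it is the maximum of $S_k$. Hence $\floor{\log_2\widetilde T_k}=L_k+1$, which gives the second identity.

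For the smallest element, I will use that the prime divisors of $L_k=2p_2\cdots p_{k+1}$ are exactly $p_1,\dots,p_{k+1}$. So an integer $m\ge2$ is coprime to $L_k$ exactly when every prime factor of $m$ is at least $p_{k+2}$. The least such $m\ge2$ is $p_{k+2}$ itself: any $m$ with $2\le m<p_{k+2}$ has a prime factor at most $m<p_{k+2}$, and that factor lies among $p_1,\dots,p_{k+1}$.

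The only step needing care is checking that $p_{k+2}$ actually lies in the range, that is, $p_{k+2}\le L_k+1$. I would get this from the Euclid argument. The number $L_k+1$ is coprime to every $p_j$ with $j\le k+1$, so any prime factor of it is at least $p_{k+2}$, and hence $p_{k+2}\le L_k+1$. (For $k=0$ this reads $3\le 3$.) Combining the two paragraphs gives $\min S_k=p_{k+2}$, and therefore $\nu_2(\widetilde T_k)=p_{k+2}$.

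I do not expect a real obstacle here. The one point to watch is the boundary case where the Euclid bound is an equality, such as $k=0$, where $\min S_0$ and $\max S_0$ coincide at $3$; this case is consistent with both claims.
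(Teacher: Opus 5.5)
Your proposal is correct and essentially matches the paper's proof. Both read off the lowest and highest set bits of a carry-free sum of distinct powers of $2$, and both identify the minimum as $p_{k+2}$ because every integer in $[2,p_{k+2})$ has a prime factor among $p_1,\ldots,p_{k+1}$. Both also use $\gcd(L_k+1,L_k)=1$ to get the maximum. Your Euclid-style step through a prime factor of $L_k+1$ is just a slightly longer route to what the paper takes directly from $L_k+1\in S_k$, namely $\min S_k\le L_k+1$.
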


\begin{proof}
The least integer greater than $1$ and coprime to $L_k$ is
$p_{k+2}$. Every smaller integer greater than $1$ has a prime
divisor among $p_1,\ldots,p_{k+1}$, whereas $p_{k+2}$ does not.
Since $L_k+1$ is itself coprime to $L_k$, this least integer
lies in the summation range in \eqref{eq:wheel-state}.

The lowest and highest occupied positions are therefore
$p_{k+2}$ and $L_k+1$, respectively. Each coefficient is
$0$ or $1$, so the sum is a binary expansion without carries.
The two identities follow.
\end{proof}

\begin{proof}[Proof of \cref{thm:main}]
We prove by induction that the recurrence is well defined and
$T_k=\widetilde T_k$. For $k=0$, the only integer in $[2,3]$
coprime to $2$ is $3$, so $\widetilde T_0=2^3=8=T_0$.

Assume $T_k=\widetilde T_k$. For this induction step, write
$p=\nu_2(T_k)$. By \cref{lem:invariant},
\[
p=p_{k+2},\qquad L=L_k,
\]
and hence $M=2^{Lp}-1$. In particular, $L\geq2$ and $p\geq3$,
so the denominators in \eqref{eq:parameters} are nonzero.
The geometric identity
\[
\frac{2^{Lp}-1}{2^L-1}=\sum_{j=0}^{p-1}2^{jL}
\]
shows that multiplication by this quotient places translated
copies of the current binary pattern in the disjoint intervals
$[jL+2,(j+1)L+1]$, for $0\leq j<p$. Translation by $jL$
preserves coprimality to $L$. Consequently no carries occur, and
\[
U
=T_k\frac{2^{Lp}-1}{2^L-1}
=\sum_{\substack{2\leq m\leq Lp+1\\\gcd(m,L)=1}}2^m.
\]
Similarly,
\[
V=\frac{2^{Lp}-1}{2^p-1}=\sum_{j=0}^{L-1}2^{jp}
\]
has set bits at the multiples of $p$ below $Lp$. The only
multiple of $p$ in $[2,Lp+1]$ not covered by this mask is $Lp$,
which is already absent from $U$ because $\gcd(Lp,L)=L>1$.
Thus $U\bitand V$ consists exactly of the set bits of $U$
whose positions are divisible by $p$.

Every set bit of $U\bitand V$ is also a set bit of $U$, so
the subtraction in \eqref{eq:parameters_integer} clears those
positions without borrowing. Therefore
\[
T_{k+1}
=\sum_{\substack{2\leq m\leq Lp+1\\\gcd(m,Lp)=1}}2^m
=\widetilde T_{k+1},
\]
because $Lp=L_{k+1}$. The geometric identities establish the
exactness of both divisions, and the bit at position $Lp+1$
ensures that the new state is positive. This completes the
induction. The prime-output identity follows from
\cref{lem:invariant}.
\end{proof}

\begin{corollary}[The period encoded by the state]
\label{cor:state-period}
For every $k\geq0$,
\[
\floor{\log_2 T_k}-1=\prod_{j=1}^{k+1}p_j.
\]
Consequently,
\[
\floor{\log_2 T_{k+1}}-1
=\bigl(\floor{\log_2 T_k}-1\bigr)\nu_2(T_k).
\]
\end{corollary}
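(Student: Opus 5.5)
The plan is to read both identities off the invariant already established in the proof of \cref{thm:main}, namely $T_k=\widetilde T_k$ for every $k\geq0$, and combine it with the second identity in \cref{lem:invariant}. That lemma gives $\floor{\log_2 \widetilde T_k}-1=L_k$. So the whole statement reduces to computing $L_k$ in terms of the primes $p_1,\ldots,p_{k+1}$, and then comparing consecutive indices.

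For the first identity, unfold the definition \eqref{eq:Lk}: $L_k=2\Pi_k=p_1\prod_{j=2}^{k+1}p_j=\prod_{j=1}^{k+1}p_j$, using $p_1=2$. The empty-product convention covers $k=0$, where $L_0=2=p_1$. Substituting $T_k=\widetilde T_k$ into \cref{lem:invariant} then yields $\floor{\log_2 T_k}-1=\prod_{j=1}^{k+1}p_j$ directly.

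For the consequence, apply the first identity at index $k+1$. This gives $\floor{\log_2 T_{k+1}}-1=\prod_{j=1}^{k+2}p_j=\bigl(\prod_{j=1}^{k+1}p_j\bigr)p_{k+2}$. The first factor equals $\floor{\log_2 T_k}-1$ by the identity at index $k$. The factor $p_{k+2}$ equals $\nu_2(T_k)$ by \eqref{eq:valuation} of \cref{thm:main}. Alternatively, one can quote the relation $Lp=L_{k+1}$ that appears in the induction step of that proof.

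I expect no real obstacle here. The only points needing care are the indexing conventions: $\Pi_k$ starts its product at $j=2$, so the factor $2$ in $L_k$ must be identified with $p_1$; and the case $k=0$ must be checked to agree with the empty-product convention. Everything else is a direct substitution of results already proved.
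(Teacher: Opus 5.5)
Your proposal is correct and follows the paper's own proof. Both combine $T_k=\widetilde T_k$ with \eqref{eq:self} and $L_k=2\Pi_k=\prod_{j=1}^{k+1}p_j$ for the first identity, and obtain the second from $L_{k+1}=L_kp_{k+2}$ together with $p_{k+2}=\nu_2(T_k)$.
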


\begin{proof}
The first identity is \eqref{eq:self}, since
$L_k=\prod_{j=1}^{k+1}p_j$. The second follows from
$L_{k+1}=L_kp_{k+2}$ and $p_{k+2}=\nu_2(T_k)$.
\end{proof}

\begin{corollary}[Prime gaps from a single state]
\label{cor:state-gap}
For every $k\geq1$, the two lowest set bits of $T_k$ occur
at positions $p_{k+2}$ and $p_{k+3}$. In particular,
\[
\nu_2\!\left(T_k-2^{\nu_2(T_k)}\right)=p_{k+3},
\]
and
\begin{equation}
\label{eq:state-gap}
p_{k+3}-p_{k+2}
=\nu_2\!\left(\frac{T_k}{2^{\nu_2(T_k)}}-1\right).
\end{equation}
\end{corollary}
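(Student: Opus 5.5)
The plan is to read off the two lowest set bits of $T_k$ from the explicit binary expansion established in the proof of \cref{thm:main}. That proof gives, for every $k\geq0$,
\[
T_k=\widetilde T_k=\sum_{\substack{2\leq m\leq L_k+1\\\gcd(m,L_k)=1}}2^m .
\]
The set bits of $T_k$ are therefore exactly the integers $m\in[2,L_k+1]$ coprime to $L_k=\prod_{j=1}^{k+1}p_j$. By \cref{lem:invariant}, the smallest such $m$ is $p_{k+2}$.

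The main step is to identify the second smallest such $m$ as $p_{k+3}$, and it splits into three checks.

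\emph{No composite slips in before $p_{k+3}$.} Any integer $m$ with $1<m<p_{k+3}$ that is coprime to $L_k$ has no prime factor among $p_1,\dots,p_{k+1}$. So every prime factor of $m$ is at least $p_{k+2}$. If $m$ were composite, then $m\geq p_{k+2}^2>p_{k+3}$, which is a contradiction; the inequality $p_{k+2}^2>p_{k+3}$ follows from Bertrand's postulate, since $p_{k+3}<2p_{k+2}\leq p_{k+2}^2$. Hence the only such $m$ is the prime $p_{k+2}$.

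\emph{$p_{k+3}$ itself is a set bit.} It is coprime to $L_k$, so it remains to check that $p_{k+3}\leq L_k+1$. This is where the hypothesis $k\geq1$ enters. For $k=0$ we have $L_0+1=3<5$, and indeed $T_0=8$ has only one set bit. For $k\geq1$ we have $L_k\geq 6$, and Bertrand gives $p_{k+3}<2p_{k+2}\leq 2\cdot 3\,p_{k+1}\leq L_k$. A cleaner route avoids Bertrand: the number $L_k-1$ is coprime to $L_k$ and exceeds $p_{k+2}$. It is therefore a set bit different from $p_{k+2}$, so the second set bit exists. The first check then shows that this second set bit is a prime, namely the least prime exceeding $p_{k+2}$, which is $p_{k+3}$.

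\emph{The gap formula.} With the two lowest set bits at $p_{k+2}<p_{k+3}$, we have $\nu_2(T_k)=p_{k+2}$. Subtracting $2^{p_{k+2}}$ clears the lowest bit without any borrow, so $\nu_2\bigl(T_k-2^{\nu_2(T_k)}\bigr)=p_{k+3}$. For \eqref{eq:state-gap}, the quotient $T_k/2^{p_{k+2}}$ is odd, and its two lowest set bits sit at positions $0$ and $p_{k+3}-p_{k+2}$. Subtracting $1$ removes bit $0$ and leaves valuation $p_{k+3}-p_{k+2}$.

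The only real obstacle is the existence of the second set bit, together with the bound showing that no composite coprime to $L_k$ lies below $p_{k+3}$. I would handle existence through the element $L_k-1$, and the composite bound through Bertrand's postulate as described above. Everything else is immediate from carry-free binary bookkeeping.
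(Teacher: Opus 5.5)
Your strategy is the paper's: read the set bits off the invariant, note that a composite coprime to $L_k$ is at least $p_{k+2}^2$, and use Bertrand for $p_{k+3}<2p_{k+2}<p_{k+2}^2$. Your first check and the final gap computation are correct. The step that fails is the one you single out as the main obstacle, placing $p_{k+3}$ in the range $[2,L_k+1]$. Both of your versions break at exactly $k=1$, where $L_1=6$, $p_3=5$ and $p_4=7$.

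\begin{itemize}
\item The Bertrand chain requires $2\cdot3\,p_{k+1}\leq L_k$. At $k=1$ this reads $18\leq 6$, and its conclusion $p_{k+3}<L_k$ reads $7<6$.
\item The ``cleaner route'' claims $L_k-1>p_{k+2}$. But $L_1-1=5=p_3$, so it produces no second set bit.
\end{itemize}

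In fact $T_1=2^5+2^7$, so the second set bit at $k=1$ is the top position $L_1+1$ itself, and no witness below $L_k$ can exist. You checked the boundary at $k=0$, but $k=1$ is tight as well. The inequality $L_k\geq 6p_{k+1}$ holds only for $k\geq2$, which is why the paper treats $k=1$ separately.

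The repair is short. Either verify $k=1$ directly from $T_1=2^5+2^7$, as the paper does, or replace the witness $L_k-1$ by $L_k+1$. For $k\geq1$, the number $L_k-1\geq5$ is coprime to $L_k$, so $p_{k+2}\leq L_k-1<L_k+1$. Since $L_k+1$ is always a set bit, there is a set bit $m_2$ with $p_{k+2}<m_2\leq L_k+1$. Your first check forces $m_2\geq p_{k+3}$, so $p_{k+3}\leq m_2\leq L_k+1$. Being coprime to $L_k$, $p_{k+3}$ is then a set bit, and it is the second lowest one. This version needs neither a case split nor Bertrand in the existence step; Bertrand is still used for $p_{k+3}<p_{k+2}^2$.
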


\begin{proof}
The set bits of $T_k$ occur precisely at the positions in
$[2,L_k+1]$ that are coprime to $L_k$.
Both $p_{k+2}$ and $p_{k+3}$ lie in this interval.
For $k=1$, this follows directly from $T_1=2^5+2^7$.
For $k\geq2$, we have $L_k\geq6p_{k+1}$, so Bertrand's
postulate gives
\[
p_{k+3}<2p_{k+2}<4p_{k+1}\leq L_k.
\]
Every composite integer coprime to $L_k$ has all its prime
factors at least $p_{k+2}$ and is therefore at least
$p_{k+2}^2$. Since
$p_{k+3}<2p_{k+2}<p_{k+2}^2$, no composite position can
intervene between the two stated prime positions.
They are therefore the two lowest set bits.

Subtracting $2^{\nu_2(T_k)}$ removes the lowest set bit,
giving the first identity. Dividing the remaining integer
by $2^{\nu_2(T_k)}$ gives \eqref{eq:state-gap}.
\end{proof}

\begin{remark}
The initial state $T_0=8$ has only one set bit. A uniform
version of \eqref{eq:state-gap}, valid for every $k\geq0$, is
\[
p_{k+3}-p_{k+2}
=\nu_2\!\left(
\frac{T_k}{2^{\nu_2(T_k)}}-1
+2^{\floor{\log_2 T_k}-1}
\right).
\]
For $k\geq1$, the preceding proof gives $p_{k+3}\leq L_k+1$,
and hence
\[
p_{k+3}-p_{k+2}\leq L_k+1-p_{k+2}<L_k.
\]
The added power of $2$ has exponent $L_k$, so it does not
change the valuation. For $k=0$, the expression inside the
valuation is $4$, giving the gap $5-3=2$.
\end{remark}

\subsection{A one-integer recurrence for the prime-counting function}
\label{sec:pi-recurrence}

Let $\Phi(T)$ denote one application of the update in
\eqref{eq:parameters} and \eqref{eq:parameters_integer}:
the quantities $L,M,U,V$ are computed from $T$, and
$\Phi(T)=U-(U\bitand V)$. On the states of \cref{thm:main},
$\Phi(T_k)=T_{k+1}$. We use $\pi(x)$ for the number of primes
not exceeding $x$.

\begin{theorem}[Prime counting as a valuation]
\label{thm:pi-recurrence}
Set $Z_2=34$. For each integer $x\geq2$, recover
\begin{equation}
\label{eq:pi-recurrence-decode}
c_x=\nu_2(Z_x),
\qquad
W_x=\frac{Z_x/2^{c_x}-1}{2},
\end{equation}
and define
\begin{equation}
\label{eq:pi-recurrence-indicator}
\varepsilon_x
=\floor{\frac{1}{(\nu_2(W_x)-x-1)^2+1}}.
\end{equation}
Put
\begin{equation}
\label{eq:pi-recurrence-wheel}
W_x^+=W_x+\varepsilon_x\bigl(\Phi(W_x)-W_x\bigr)
\end{equation}
and
\begin{equation}
\label{eq:pi-recurrence-state}
Z_{x+1}=2^{c_x+\varepsilon_x}(2W_x^++1).
\end{equation}
Then the recurrence is well defined, all divisions outside
the floor are exact, and, for every integer $x\geq2$,
\begin{equation}
\label{eq:pi-recurrence-invariant}
\nu_2(Z_x)=\pi(x),
\qquad
W_x=T_{\pi(x)-1}.
\end{equation}
Thus the prime-counting function is the $2$-adic valuation
of a one-integer recurrence.
\end{theorem}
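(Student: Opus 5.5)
We prove by induction on $x\geq2$ the joint invariant
\[
Z_x=2^{\pi(x)}\bigl(2T_{\pi(x)-1}+1\bigr),
\]
which immediately gives \eqref{eq:pi-recurrence-invariant}. Because $2T_{\pi(x)-1}+1$ is odd, this form shows that $c_x=\nu_2(Z_x)=\pi(x)$. It also shows that $Z_x/2^{c_x}-1=2T_{\pi(x)-1}$ is even, so both divisions in \eqref{eq:pi-recurrence-decode} are exact and $W_x=T_{\pi(x)-1}$. For the base case, note that $Z_2=34=2\cdot17=2^{1}(2\cdot8+1)$, and $\pi(2)=1$, $T_0=8$. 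So the invariant holds at $x=2$.

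For the inductive step, assume the invariant at $x$ and write $k=\pi(x)-1\geq0$. Then $W_x=T_k$, so by \cref{thm:main} we have $\nu_2(W_x)=p_{k+2}=p_{\pi(x)+1}$, the least prime exceeding $x$. In particular $\nu_2(W_x)\geq x+1$, and since $T_k>0$ the valuation is defined. The integer $D=\nu_2(W_x)-x-1$ is therefore nonnegative. The expression $\floor{1/(D^2+1)}$ equals $1$ when $D=0$ and $0$ when $D\geq1$. Hence $\varepsilon_x=1$ exactly when $p_{\pi(x)+1}=x+1$, that is, exactly when $x+1$ is prime. Equivalently,
\[
\varepsilon_x=\pi(x+1)-\pi(x).
\]

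Now split into two cases.

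\emph{Case $\varepsilon_x=0$.} Then $W_x^+=W_x=T_k$, so $Z_{x+1}=2^{\pi(x)}(2T_k+1)$. Since $\pi(x+1)=\pi(x)$, this is the invariant at $x+1$.

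\emph{Case $\varepsilon_x=1$.} Then $W_x^+=\Phi(T_k)$. By \cref{thm:main} the update $\Phi$ is well defined on $T_k$, its divisions are exact, and $\Phi(T_k)=T_{k+1}$. Thus $Z_{x+1}=2^{\pi(x)+1}(2T_{k+1}+1)$, and since $\pi(x+1)=\pi(x)+1$ and $\pi(x+1)-1=k+1$, this is again the invariant.

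Well-definedness of \eqref{eq:pi-recurrence-wheel} also needs care. The expression $\Phi(W_x)-W_x$ is multiplied by $\varepsilon_x$, and $\Phi(W_x)$ is always computable because $W_x=T_k$ is a valid state. With the truncated-subtraction convention, we should check that $\Phi(T_k)\geq T_k$, so that the literal subtraction is exact. This holds because $T_{k+1}$ has its top bit at position $L_{k+1}+1>L_k+1$. The division by $2$ in $W_x$ and the division by $2^{c_x}$ were handled above.

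The main obstacle is the indicator step: identifying $\nu_2(W_x)$ with the next prime after $x$, so that $\nu_2(W_x)=x+1$ characterizes primality of $x+1$. This rests on \cref{thm:main} together with the fact that $W_x=T_{\pi(x)-1}$ is the wheel for exactly the primes up to $x$. Once that is in place, everything else is bookkeeping.
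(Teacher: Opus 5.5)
Your proof is correct and follows the same route as the paper. Both argue by induction on the invariant $Z_x=2^{\pi(x)}\bigl(2T_{\pi(x)-1}+1\bigr)$, use \cref{thm:main} to identify $\nu_2(W_x)$ with the least prime exceeding $x$ (so that $\varepsilon_x=\pi(x+1)-\pi(x)$), and close with the same two cases. Your added check that $\Phi(T_k)>T_k$, which makes the subtraction in \eqref{eq:pi-recurrence-wheel} nonnegative, is a detail the paper records only in its appendix discussion of the fused update.
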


\begin{proof}
Since $Z_2=34=2(2T_0+1)$, decoding gives $c_2=1=\pi(2)$
and $W_2=T_0$, so the assertion holds at $x=2$.

Assume \eqref{eq:pi-recurrence-invariant} holds at $x$.
By \cref{thm:main},
\[
\nu_2(W_x)=\nu_2(T_{\pi(x)-1})=p_{\pi(x)+1},
\]
the least prime strictly greater than $x$. The reciprocal
in \eqref{eq:pi-recurrence-indicator} is $1$ when
$x+1=\nu_2(W_x)$ and lies strictly between $0$ and $1$
otherwise. Hence $\varepsilon_x=1$ exactly when $x+1$ is
prime, and $\varepsilon_x=0$ otherwise. It follows that
\[
\pi(x+1)=\pi(x)+\varepsilon_x.
\]
The update $\Phi(W_x)$ is well defined by \cref{thm:main}.
If $\varepsilon_x=0$, then $W_x^+=W_x$; if
$\varepsilon_x=1$, then $W_x^+=\Phi(W_x)=T_{\pi(x)}$.
In either case,
\[
W_x^+=T_{\pi(x+1)-1}.
\]
Equation \eqref{eq:pi-recurrence-state} therefore gives
\[
Z_{x+1}
=2^{\pi(x+1)}\bigl(2T_{\pi(x+1)-1}+1\bigr).
\]
The factor in parentheses is positive and odd, so
$\nu_2(Z_{x+1})=\pi(x+1)$. Applying
\eqref{eq:pi-recurrence-decode} then recovers
$W_{x+1}=T_{\pi(x+1)-1}$, with both divisions exact.
The divisions within $\Phi$ are exact by \cref{thm:main}.
This completes the induction.
\end{proof}

\begin{remark}
The recurrence in \cref{thm:pi-recurrence} uses one stored
integer, but its update depends explicitly on the index $x$.
The recurrence of \cref{thm:main} is autonomous: its update
depends only on its state. In the prime-counting recurrence,
the wheel state advances exactly when the next integer is
prime, while the exponent of $2$ records the current prime
count, including the initial prime $2$. Substituting
\cref{thm:and-term} into $\Phi$ removes bitwise AND as a
primitive operation.
\end{remark}

\subsection{Twin primes and pairs of surviving positions}
\label{sec:twin-primes}

The binary state also records pairs of surviving integers separated by
$2$. We give exact formulas for their number and a criterion under which
such a pair consists of primes. The arguments use the binary-state
invariant established in Section~\ref{sec:integer-proof}.
For this subsection, write
\[
L_k=\prod_{j=1}^{k+1}p_j,
\qquad q_k=\nu_2(T_k)=p_{k+2}.
\]
Thus $L_k$ agrees with the notation in \eqref{eq:Lk}, and
\cref{lem:invariant} together with the proof of \cref{thm:main} gives
\begin{equation}
\label{eq:twin-state-invariant}
T_k=\sum_{\substack{2\leq m\leq L_k+1\\\gcd(m,L_k)=1}}2^m.
\end{equation}
As elsewhere, $L_k$ and $q_k$ are auxiliary descriptions of the state,
not additional inputs to the recurrence.

\begin{proposition}[A residue test for a twin-prime output]
\label{prop:twin-output}
For every $k\geq1$,
\begin{equation}
\label{eq:twin-output}
p_{k+3}-p_{k+2}=2
\quad\Longleftrightarrow\quad
\frac{T_k}{2^{\nu_2(T_k)}}\equiv5\pmod8.
\end{equation}
\end{proposition}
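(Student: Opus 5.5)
By \cref{cor:state-gap}, for $k\geq1$ the odd integer $O_k=T_k/2^{\nu_2(T_k)}$ satisfies $O_k-1=2^{g}\cdot(\text{odd})$, where $g=p_{k+3}-p_{k+2}$. So I would reduce \eqref{eq:twin-output} to a statement about the low bits of $O_k$, using the explicit binary form \eqref{eq:twin-state-invariant}. Write $q=p_{k+2}$. Then
\[
O_k=\sum_{\substack{2\le m\le L_k+1\\ \gcd(m,L_k)=1}}2^{m-q}.
\]
This is $1$ plus powers $2^{m-q}$ with $m>q$ a surviving position. Its residue modulo $8$ therefore depends only on which of $m=q+1$ and $m=q+2$ are surviving positions, i.e.\ coprime to $L_k$ and inside the range.

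The steps, in order, are as follows.

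First, since $q$ is odd, $q+1$ is even, and $L_k$ is even, position $q+1$ is never set. Hence
\[
O_k\equiv 1+4\cdot[\,q+2\text{ is set}\,]\pmod 8.
\]

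Second, I would show that position $q+2$ is set exactly when $q+2$ is prime, i.e.\ when $p_{k+3}=q+2$. If $q+2$ is set, it is coprime to $L_k$ and greater than $q$. By the argument in \cref{cor:state-gap}, the two lowest set bits are $q$ and $p_{k+3}$, with no position strictly between them. Hence $p_{k+3}\le q+2$. Since $q+1$ is even, this forces $p_{k+3}=q+2$. Conversely, if $p_{k+3}=q+2$, then \cref{cor:state-gap} says it is the second lowest set bit of $T_k$, so it is set.

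Combining the two steps gives $O_k\equiv5\pmod 8$ iff $p_{k+3}-p_{k+2}=2$, and otherwise $O_k\equiv1\pmod8$.

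Alternatively, one can argue directly from the valuation $\nu_2(O_k-1)=g$ given by \cref{cor:state-gap}. Since $g$ is even, $g\ge2$, so $O_k\equiv1\pmod4$. Moreover, $O_k\equiv5\pmod8$ iff $\nu_2(O_k-1)=2$, i.e.\ iff $g=2$. This one-line version is the cleanest, and I would present it as the proof.

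The only possible obstacle is making sure \cref{cor:state-gap} applies for all $k\ge1$, including $k=1$. That is covered there: $T_1=2^5+2^7$, so $O_1=5$ and $g=2$, consistent with the claim.
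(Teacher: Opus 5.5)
Your proposal is correct. The one-line version you plan to present is essentially the paper's own proof: read $p_{k+3}-p_{k+2}=\nu_2(O_k-1)$ off \cref{cor:state-gap}, and note that the positive integer $O_k-1$ has valuation exactly $2$ if and only if it is $\equiv 4\pmod 8$. Your preliminary bit-level argument is also valid, but it merely re-derives the relevant part of \cref{cor:state-gap} and is not needed.
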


\begin{proof}
By \cref{cor:state-gap},
\[
p_{k+3}-p_{k+2}
=\nu_2\!\left(\frac{T_k}{2^{\nu_2(T_k)}}-1\right).
\]
The positive integer inside the valuation has valuation $2$ precisely
when it is congruent to $4$ modulo $8$. This is equivalent to the
congruence in \eqref{eq:twin-output}.
\end{proof}

For $k\geq1$, define the integer
\begin{equation}
\label{eq:twin-mask}
A_k=T_k\bitand\floor{\frac{T_k}{4}}.
\end{equation}
Division by $4$, followed by the floor, shifts the binary expansion
two positions to the right. Consequently, the bit in position $m$
of $A_k$ is set exactly when positions $m$ and $m+2$ are both set
in $T_k$. We call such a pair a \emph{twin candidate} at stage $k$.
Let $\HW(a)$ denote the number of set bits in the binary expansion
of a nonnegative integer $a$.

\begin{theorem}[Exact enumeration of twin candidates]
\label{thm:twin-candidates}
For every $k\geq1$,
\begin{equation}
\label{eq:twin-mask-invariant}
A_k=
\sum_{\substack{2\leq m\leq L_k-1\\
\gcd(m(m+2),L_k)=1}}2^m.
\end{equation}
In particular, $A_k>0$, and
\begin{equation}
\label{eq:twin-candidate-count}
\HW(A_k)=\prod_{j=2}^{k+1}(p_j-2).
\end{equation}
The counts therefore satisfy
\begin{equation}
\label{eq:twin-candidate-growth}
\HW(A_{k+1})
=\bigl(\nu_2(T_k)-2\bigr)\HW(A_k).
\end{equation}
\end{theorem}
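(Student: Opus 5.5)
We start from the state invariant \eqref{eq:twin-state-invariant}: bit $m$ of $T_k$ is set exactly when $2\le m\le L_k+1$ and $\gcd(m,L_k)=1$. The integer $\floor{T_k/4}$ has bit $m$ set exactly when bit $m+2$ of $T_k$ is set; the bits at positions $0,1$ of $T_k$ are zero, so nothing is lost by the floor. Hence bit $m$ of $A_k$ is set iff $2\le m$, $m+2\le L_k+1$, $\gcd(m,L_k)=1$ and $\gcd(m+2,L_k)=1$. These conditions combine to $2\le m\le L_k-1$ and $\gcd(m(m+2),L_k)=1$, which gives \eqref{eq:twin-mask-invariant}. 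The sum is a carry-free binary expansion.

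For the count we use the Chinese remainder theorem. Since $L_k$ is even and $m$ must be odd, the condition $\gcd(m(m+2),L_k)=1$ amounts to $m$ odd and $m\not\equiv 0,-2\pmod{p_j}$ for $2\le j\le k+1$. This condition depends only on $m\bmod L_k$. Over a full residue system modulo $L_k=2\prod_{j=2}^{k+1}p_j$, CRT gives exactly $1\cdot\prod_{j=2}^{k+1}(p_j-2)$ admissible residues, because each odd $p_j$ excludes exactly two distinct classes.

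It remains to check that the window $2\le m\le L_k-1$ meets every admissible class exactly once. This is the only delicate step. The window is a full residue system $\{0,1,\dots,L_k-1\}$ with the residues $0$ and $1$ removed. The residue $0$ is not admissible, being even. The residue $1$ is admissible only if $1+2=3$ is coprime to $L_k$, which fails because $k\ge1$ forces $3\mid L_k$. So the window contains exactly one representative of each admissible class, which proves \eqref{eq:twin-candidate-count}. This is where the hypothesis $k\ge1$ enters; for $k=0$ the residue $1$ would count.

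Positivity follows because each factor satisfies $p_j-2\ge1$, so the product is at least $1$. Finally, \eqref{eq:twin-candidate-growth} follows by taking the ratio of consecutive products and using $\nu_2(T_k)=p_{k+2}$ from \cref{thm:main}.
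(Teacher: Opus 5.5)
Your proposal is correct and follows essentially the same route as the paper. You read off the bits of $A_k$ from the invariant, count by CRT with one allowed class modulo $2$ and $p_j-2$ classes modulo each odd $p_j$, and observe that the window $[2,L_k-1]$ misses only the classes $0$ and $1$, neither of which is admissible because $6\mid L_k$. The only cosmetic difference is that you deduce $A_k>0$ from the product being at least $1$, whereas the paper exhibits the set bit at position $L_k-1$ directly, using that $L_k\pm1$ are coprime to $L_k$.
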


\begin{proof}
The description of the set bits of $A_k$, together with
\eqref{eq:twin-state-invariant}, gives
\eqref{eq:twin-mask-invariant}. The position $L_k-1$ occurs in this
sum, since both $L_k-1$ and $L_k+1$ are coprime to $L_k$.
Hence $A_k>0$.

Because $k\geq1$, the integer $L_k$ is divisible by $6$.
Among the residue classes modulo $L_k$, the class $0$ fails the
coprimality condition, and the class $1$ fails it modulo $3$.
Thus the interval $[2,L_k-1]$ contains exactly one representative
of every class satisfying $\gcd(m(m+2),L_k)=1$.

Modulo $2$, precisely one class is allowed. Modulo each odd prime
$p_j\mid L_k$, precisely the two distinct classes $0$ and $-2$
are forbidden, leaving $p_j-2$ allowed classes. Since $L_k$ is
squarefree, the Chinese remainder theorem gives
\eqref{eq:twin-candidate-count}. Adjoining the next prime
$q_k=\nu_2(T_k)$ to the product gives
\eqref{eq:twin-candidate-growth}.

Equivalently, each admissible class modulo $L_k$ has $q_k$ lifts
modulo $L_kq_k$. Exactly one lift has its first member divisible
by $q_k$, and exactly one has its second member divisible by $q_k$.
These lifts are distinct because $q_k\nmid2$, so exactly $q_k-2$
survive.
\end{proof}

The product in \eqref{eq:twin-candidate-count} is the classical
count of gaps of size $2$ in a primorial reduced-residue cycle;
see Holt and Rudd~\cite{HoltRudd2014}. Here it is the Hamming
weight of the single integer \eqref{eq:twin-mask}. For example,
\[
A_1=2^5,
\qquad
A_2=2^{11}+2^{17}+2^{29}.
\]
These record the pairs $(5,7)$ and
$(11,13),(17,19),(29,31)$, respectively. Later states may also
contain candidates with composite members: $A_3$ contains the
pair $(209,211)$, although $209=11\cdot19$.

\begin{proposition}[Certification below the square threshold]
\label{prop:twin-square}
Let $k\geq1$. If the bit in position $m$ of $A_k$ is set and
$m+2<q_k^2$, then $m$ and $m+2$ are twin primes. In particular,
\begin{equation}
\label{eq:twin-square-criterion}
\nu_2(A_k)+2<\nu_2(T_k)^2
\end{equation}
implies that $\nu_2(A_k)$ and $\nu_2(A_k)+2$ are twin primes.
\end{proposition}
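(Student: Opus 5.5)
The plan is to use the description \eqref{eq:twin-mask-invariant} of $A_k$ from \cref{thm:twin-candidates} together with the classical fact that an integer $N>1$ with no prime factor below $\sqrt N$ is prime.

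First step: the coprimality conditions. If the bit in position $m$ of $A_k$ is set, then \eqref{eq:twin-mask-invariant} gives $2\leq m$ and $\gcd(m(m+2),L_k)=1$. Hence neither $m$ nor $m+2$ has a prime factor among $p_1,\ldots,p_{k+1}$. So every prime factor of $m$ or of $m+2$ is at least $p_{k+2}=q_k$.

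Second step: primality. Both $m$ and $m+2$ exceed $1$. A composite $N$ whose prime factors are all at least $q_k$ satisfies $N\geq q_k^2$. By hypothesis $m<m+2<q_k^2$, so neither number can be composite. Therefore $m$ and $m+2$ are both prime, and they form a twin pair.

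Third step: the displayed criterion. Since $A_k>0$ by \cref{thm:twin-candidates}, the valuation $m=\nu_2(A_k)$ is defined. It is the position of the lowest set bit of $A_k$, so that bit is set. Also $q_k=\nu_2(T_k)$, so condition \eqref{eq:twin-square-criterion} is exactly the hypothesis $m+2<q_k^2$. The first part then gives the conclusion.

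No step here is a real obstacle. The only point needing care is checking that \eqref{eq:twin-mask-invariant} guarantees $m\geq2$, which rules out the unit $1$ being counted as a surviving position; the summation range $2\leq m$ settles this.
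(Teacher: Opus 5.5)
Your proof is correct and follows essentially the same route as the paper. Coprimality to $L_k$ from \eqref{eq:twin-mask-invariant} forces every prime factor of $m$ and $m+2$ to be at least $q_k$, so the bound $m+2<q_k^2$ rules out compositeness, and the final claim follows by taking $m=\nu_2(A_k)$, which is defined because $A_k>0$.
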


\begin{proof}
Every integer greater than $1$ and coprime to $L_k$ has all its
prime factors at least $q_k$. If such an integer is composite,
it is therefore at least $q_k^2$. By
\eqref{eq:twin-mask-invariant}, both $m$ and $m+2$ are coprime
to $L_k$ and greater than $1$. The stated upper bound makes
both prime. The final assertion follows by taking
$m=\nu_2(A_k)$, which is defined because $A_k>0$.
\end{proof}

The next corollary restates the twin-prime conjecture in terms of the
states $T_k$. It is a reformulation only and gives no progress toward the
conjecture.

\begin{corollary}[Reformulations of the twin-prime conjecture]
\label{cor:twin-infinitude}
The following statements are equivalent:
\begin{enumerate}
\item There are infinitely many twin-prime pairs.
\item The congruence
\[
\frac{T_k}{2^{\nu_2(T_k)}}\equiv5\pmod8
\]
holds for infinitely many $k\geq1$.
\item The inequality
\[
\nu_2\!\left(T_k\bitand\floor{\frac{T_k}{4}}\right)+2
<\nu_2(T_k)^2
\]
holds for infinitely many $k\geq1$.
\end{enumerate}
\end{corollary}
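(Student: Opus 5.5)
We prove (1)$\Leftrightarrow$(2) and (1)$\Leftrightarrow$(3) separately. Both directions rest on the fact that the consecutive primes $p_{k+2},p_{k+3}$, for $k\geq1$, run through all pairs of consecutive odd primes from $(5,7)$ onward.

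\emph{(1)$\Leftrightarrow$(2).} By \cref{prop:twin-output}, condition (2) holds at $k$ exactly when $p_{k+3}-p_{k+2}=2$. Any twin pair $(p,p+2)$ with $p\geq5$ consists of consecutive primes, so $p=p_{k+2}$ for a unique $k\geq1$. Hence the set of $k$ satisfying (2) is in bijection with the twin pairs other than $(3,5)$. One set is infinite iff the other is.

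\emph{(3)$\Rightarrow$(1).} By \cref{prop:twin-square}, whenever (3) holds at $k$, the pair $(\nu_2(A_k),\nu_2(A_k)+2)$ is a twin pair. It remains to see that infinitely many $k$ produce infinitely many distinct pairs. I would show $\nu_2(A_k)\geq q_k=p_{k+2}$. Indeed, by \eqref{eq:twin-mask-invariant} every set position $m$ of $A_k$ is coprime to $L_k$ and exceeds $1$, so $m\geq p_{k+2}$. Since $p_{k+2}\to\infty$, the pairs produced are unbounded and hence infinite in number.

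\emph{(1)$\Rightarrow$(3).} This is the step needing care. Given infinitely many twin pairs, I would choose $k$ with $p_{k+2},p_{k+3}$ a twin pair, which is possible for infinitely many $k$ by the first part. I then claim $\nu_2(A_k)=p_{k+2}$. The bit $m=p_{k+2}$ lies in $A_k$ because $m$ and $m+2=p_{k+3}$ are coprime to $L_k$ and $m\leq L_k-1$; the latter holds by the bound $p_{k+3}\leq L_k+1$ from the proof of \cref{cor:state-gap} (for $k=1$, directly $7\leq 7$). No smaller position can occur, since positions are at least $p_{k+2}$ as shown above. The inequality in (3) then reads $p_{k+2}+2<p_{k+2}^2$, which is true because $p_{k+2}\geq5$. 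So (3) holds for infinitely many $k$.

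The only delicate point is the range check $m\leq L_k-1$ in the last step, and it is supplied by the Bertrand argument already made in \cref{cor:state-gap}.
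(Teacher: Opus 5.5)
Your proof is correct and follows essentially the paper's route: (1)$\Leftrightarrow$(2) via \cref{prop:twin-output}, then (2)$\Rightarrow$(3) by showing $\nu_2(A_k)=p_{k+2}$ at twin indices, then (3)$\Rightarrow$(1) via \cref{prop:twin-square} and the lower bound $\nu_2(A_k)\geq p_{k+2}\to\infty$. The one difference is minor: you place the bit $p_{k+2}$ in $A_k$ using the closed form \eqref{eq:twin-mask-invariant} and a range check, while the paper reads it off the two lowest set bits of $T_k$ from \cref{cor:state-gap}; the range check is easier than you suggest, since $p_{k+2}\leq L_k-1$ already follows from $L_k-1>1$ being coprime to $L_k$.
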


\begin{proof}
The equivalence of the first two statements follows from
\cref{prop:twin-output}, since the values $p_{k+2}$ with $k\geq1$
run through all primes at least $5$.

Suppose the second statement holds at an index $k$. Then
$q_k$ and $q_k+2$ are the two lowest occupied positions of $T_k$,
so $\nu_2(A_k)=q_k$. Since $q_k\geq5$,
\[
\nu_2(A_k)+2=q_k+2<q_k^2.
\]
The third statement therefore follows from the second.

Conversely, each index satisfying the third statement gives a
twin-prime pair by \cref{prop:twin-square}. Every occupied
position of $A_k$ is also occupied in $T_k$, so
\[
\nu_2(A_k)\geq\nu_2(T_k)=p_{k+2}.
\]
Along any infinite set of indices these lower bounds tend to
infinity. The resulting twin-prime pairs are therefore unbounded,
which proves the first statement.
\end{proof}

\begin{remark}
Equation~\eqref{eq:twin-candidate-growth} counts surviving pairs
over a complete period of length $L_k$. The inequality
\eqref{eq:twin-square-criterion} concerns the position of the
first such pair. Establishing that inequality for infinitely
many indices requires control of these positions beyond the
total count; the counting formula alone does not establish
twin-prime infinitude.
\end{remark}

\section{Packed divisor sieves and prime extraction}
\label{sec:nth-term}

The recurrence above is short because it carries a large state from one
prime to the next. A fixed straight-line program is short in a different sense:
its input is the rank $n$, and its instruction list does not grow
with $n$. We now give such a program for the $n$-th prime $p_n$, where
\[
p_1=2,\qquad p_2=3,\qquad p_3=5,\ldots.
\]
This construction is independent of \cref{thm:main}. It packs a finite
divisor sieve into large integers and then selects the desired rank by
counting binary carries. The form below uses the quadratic cutoff $n(n+1)$, tests
divisibility only by integers at most $n$, and needs two AND operations
followed by one binomial valuation.

\subsection{The packed construction}
\label{sec:nth-packed}

Let $n\geq 1$. To keep the roles of the parameters visible, write
\begin{align}
C_n&=n(n+1) &&\text{(candidate cutoff)},\nonumber\\
F_n&=(n+1)! &&\text{(common divisor multiple)},\nonumber\\
h_n&=(n+2)^2 &&\text{(field-width exponent)},
\label{eq:nth-parameters-a}\\
D_n&=2^{h_n},\qquad
H_n=2^{n+1},\qquad
B_n=D_nH_n,\nonumber\\
A_n&=B_n^{C_n+1},\qquad
E_n=A_n^{n+1},\qquad
\mathcal U_n=G_0(B_n,C_n+1).
\label{eq:nth-parameters-b}
\end{align}
Here $C_n$ is the largest candidate that will be represented, $B_n$ is the
base of one candidate field, and
$\mathcal U_n=1+B_n+\cdots+B_n^{C_n}$ is the corresponding repunit.

Define the packed reciprocal integer
\begin{equation}
\label{eq:nth-C}
\mathcal R_n=
\frac{
E_n^2+1-\bigl((A_n-1)^{E_nF_n}\bmod E_n^2\bigr)
}{E_n}
-F_nA_n,
\end{equation}
and set
\begin{align}
\kappa_n&=\floor{\frac{D_n+F_n-1}{F_n}},
\label{eq:nth-kappa}\\
\mathcal Q_n&=
\frac{
\bigl(\kappa_n\mathcal R_n G_1(B_n,C_n+1)\bigr)
\bitand
\bigl(D_n(H_n-1)G_0(B_n,(C_n+1)(n+1))\bigr)
}{D_n},
\label{eq:nth-Q}\\
\mathcal W_n&=\mathcal Q_n\bmod(A_n-1),
\label{eq:nth-W}\\
\Delta_n&=
\bigl((A_n-B_n+1)\mathcal W_n\bmod A_n\bigr)
-B_n^2G_0(B_n,n-1),
\label{eq:nth-T}\\
\mathcal P_n&=\mathcal U_n-1-B_n-
\frac{
\bigl(\Delta_n+(H_n-1)\mathcal U_n\bigr)\bitand(H_n\mathcal U_n)
}{H_n},
\label{eq:nth-P}\\
\mathcal S_n&=(\mathcal P_n\mathcal U_n)\bmod A_n.
\label{eq:nth-S}
\end{align}
Finally define
\begin{equation}
\label{eq:nth-answer}
\widehat{p}_n
=C_n+1-
\nu_2\!\binom{
\mathcal S_n+(2H_n-n)\mathcal U_n
}{H_n\mathcal U_n}.
\end{equation}

\begin{theorem}[A packed straight-line construction for $p_n$]
\label{thm:nth-packed}
For every $n\geq 1$, all quantities in
\eqref{eq:nth-parameters-a}--\eqref{eq:nth-answer} are defined, every
displayed quotient outside a floor is exact, and
\[
\widehat{p}_n=p_n.
\]
The packed construction uses two bitwise $\AND$ operations, one factorial, and
one $2$-adic valuation of a binomial coefficient. It takes neither a
preceding prime nor a search, variable-length sum, or variable-length
product as an input operation.
\end{theorem}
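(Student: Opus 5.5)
The plan is to decode the construction layer by layer, proving a closed description of each packed integer as a sum of base-$B_n$ fields indexed by candidates $m\in[0,C_n]$ and, where relevant, by trial divisors $d$. Two elementary facts come first. (i) The sieve is sound on the candidate range: a composite $m\le C_n=n(n+1)<(n+1)^2$ has a divisor $d$ with $2\le d\le n$. (ii) $p_n\le C_n$, so the $n$-th prime is among the represented candidates. For (ii) I would use a standard explicit bound such as $p_n<n(\ln n+\ln\ln n)$ for $n\ge6$, and check $n\le5$ by hand. With these, $p_n$ is the unique $m\le C_n$ with $\pi(m)=n$ and $m$ prime, and $C_n+1-p_n$ is the number of $m\in[0,C_n]$ with $\pi(m)\ge n$.

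\emph{Step 1: the reciprocal integer $\mathcal R_n$.} Expand $(A_n-1)^{E_nF_n}$ binomially in powers of $A_n$. Since $E_nF_n$ is even and $E_n^2=A_n^{2(n+1)}$, the residue modulo $E_n^2$ keeps only the terms $(-1)^i\binom{E_nF_n}{i}A_n^i$ with $i<2n+2$. I expect the numerator to reorganize so that, after dividing by $E_n$ and removing $F_nA_n$, $\mathcal R_n$ has digits in base $A_n$ that encode the integers $F_n/d$ for $d=1,\dots,n+1$ (as $\binom{N}{i}$-type ratios with $F_n=(n+1)!$). The tasks here are:
\begin{itemize}
\item verify exactness of the division by $E_n$;
\item show all digits are smaller than $A_n$ (this uses the field width $h_n=(n+2)^2$);
\item handle the sign bookkeeping, since the nonnegativity of $E_n^2+1-(\cdots)$ must be checked.
\end{itemize}
I expect this to be the main obstacle: pinning down the exact digit content of $\mathcal R_n$ and proving it is carry-free. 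My first attempt would be to compute it explicitly for $n=1,2$ to guess the pattern, then prove it using bounds such as $\binom{E_nF_n}{i}<E_n^{i}$.

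\emph{Step 2: residues.} Multiplying by $G_1(B_n,C_n+1)=\sum_m mB_n^m$ and by $\kappa_n=\lceil D_n/F_n\rceil$ produces, in the field indexed by $(m,d)$, the number $\kappa_n m F_n/d$. Its bits above position $h_n$ encode $m\bmod d$ through the fractional part of $m/d$, scaled by $D_n$. The first AND with the mask $D_n(H_n-1)G_0(\cdot)$, followed by exact division by $D_n$, isolates an $(n+1)$-bit window in each field. I must show that this window is nonzero exactly when $d\nmid m$. The choice of $\kappa_n$, rounding $D_n/F_n$ up, should guarantee that the error stays below $2^{h_n}$, so that $d\mid m$ gives a zero window.

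\emph{Step 3: flags and counts.} Reducing modulo $A_n-1$ sums the $d$-blocks, since $A_n\equiv1$. This gives, in field $m$, a quantity that is then turned into a per-candidate indicator. The multiplier $A_n-B_n+1$ followed by reduction modulo $A_n$ acts as a shift and subtract, and the correction $B_n^2G_0(B_n,n-1)$ cancels the trivial divisor contributions. I expect $\Delta_n$ to carry, in field $m$, a value in $[0,H_n)$ that vanishes iff $m$ has no divisor in $[2,n]$ other than possibly itself.

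The second AND then tests the carry out of $\Delta_n+(H_n-1)$ field by field, and subtracting the result from $\mathcal U_n-1-B_n$ yields the prime indicators $\mathcal P_n=\sum_{m\ \mathrm{prime}}B_n^m$. Here the fields $m=0,1$ are removed explicitly, and fact (i) is used for $m>n$.

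Multiplying by $\mathcal U_n$ and reducing modulo $A_n$ gives the prefix counts $\mathcal S_n=\sum_m\pi(m)B_n^m$, with each $\pi(m)<H_n$.

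\emph{Step 4: extraction.} Finally, Kummer's theorem says that $\nu_2\binom{a+b}{b}$ equals the number of carries when adding $a$ and $b$ in base $2$. I apply it with $b=H_n\mathcal U_n$ and $a=\mathcal S_n+(2H_n-n)\mathcal U_n-H_n\mathcal U_n=\mathcal S_n+(H_n-n)\mathcal U_n$. In field $m$ the digit of $a$ is $\pi(m)+H_n-n$, which is at least $H_n$ exactly when $\pi(m)\ge n$. Adding $H_n$ then produces exactly one carry at bit position $n+1$ in that field, and no carry ever propagates between fields because of the width $D_n$. Hence the valuation equals $\#\{m\le C_n:\pi(m)\ge n\}=C_n+1-p_n$, which gives $\widehat p_n=p_n$.

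The operation-count claims in the statement (two ANDs, one factorial, one binomial valuation, no search) are then read off directly from \eqref{eq:nth-parameters-a}--\eqref{eq:nth-answer}.
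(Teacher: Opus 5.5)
Your overall architecture (reciprocal packing, masking, summing over divisors, first differences, a second AND for composite flags, prefix sums, Kummer carry counting) matches the construction. Step~4 and fact~(ii) via your explicit prime bound are fine. There are, however, two genuine gaps.

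\textbf{Step~1.} You flag the digit content of $\mathcal R_n$ as the main obstacle. You propose to guess it from small cases and control it with size bounds such as $\binom{E_nF_n}{i}<E_n^i$. Size bounds cannot determine a residue modulo $E_n^2$. Moreover, after discarding $i\ge 2n+2$ you must still show that the terms with $n+1\le i\le 2n+1$ vanish modulo $E_n^2$. The missing idea is the congruence
\[
\frac{1}{E_n}\binom{E_nF_n}{j}=\frac{F_n}{j}\prod_{u=1}^{j-1}\Bigl(\frac{E_nF_n}{u}-1\Bigr)\equiv(-1)^{j-1}\frac{F_n}{j}\pmod{E_n},
\]
valid for $1\le j\le n$ because every $u\le j$ divides $F_n=(n+1)!$. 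It must be combined with the $2$-adic bound
\[
\nu_2\Bigl(\binom{E_nF_n}{j}A_n^j\Bigr)\ge\nu_2(E_n)+\nu_2(F_n)-\nu_2(j)+j\,\nu_2(A_n),
\]
which follows from $j\binom{\rho}{j}=\rho\binom{\rho-1}{j-1}$ and is at least $2\nu_2(E_n)$ for every $j\ge n+1$. Since $E_nF_n$ is even, this gives $(A_n-1)^{E_nF_n}\equiv1-E_nc\pmod{E_n^2}$ with $0<c=\sum_{j=1}^{n}(F_n/j)A_n^j<E_n$. Hence $\mathcal R_n=\sum_{j=2}^{n}(F_n/j)A_n^j$. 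Note that the divisors run over $2\le j\le n$, not $1\le d\le n+1$.

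\textbf{Step~2.} You misidentify what the first AND extracts, so your Step~3 is asserted rather than derived. Write $\kappa_nF_n=D_n+\epsilon$ with $0\le\epsilon<F_n$. The field value is then $\kappa_nmF_n/d=D_n\,m/d+\epsilon m/d$, where $0\le\epsilon m/d<D_n/d$ because $C_nF_n<D_n$. So the bits at positions $\ge h_n$, exactly those kept by the mask $D_n(H_n-1)$, hold the integer part $\lfloor m/d\rfloor$. The fractional part of $m/d$ lies below position $h_n$ and is discarded. The window is therefore not zero when $d\mid m$ (for $m=d=2$ it equals $1$), and it is not a non-divisibility flag at all.

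Divisibility is detected only later. Reduction modulo $A_n-1$ gives field values $w_m=\sum_{d=2}^{n}\lfloor m/d\rfloor$. The multiplier $A_n-B_n+1\equiv1-B_n\pmod{A_n}$ then forms the nonnegative, carry-free first differences
\[
w_m-w_{m-1}=\#\{2\le d\le n: d\mid m\},
\]
and $B_n^2G_0(B_n,n-1)$ removes the self-divisor for $2\le m\le n$. Under your reading of Step~2, the sum over $d$ would count non-divisors, and its first differences in $m$ would not be divisor counts. So your description of $\Delta_n$ does not follow from the mechanism you propose.
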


The program in Appendix~\ref{sec:packed-slp} evaluates these formulas
with shared intermediate values. Its fully arithmetic implementation uses
$166$ operations for $p_n$; the specializations for $\pi(n)$ and
$\pi_2(n)$ use $132$ and $167$, respectively. We first explain why the
packed integers encode the required sieve and rank information.

The proof is split into three elementary pieces: parameter bounds, reciprocal packing, and the packed divisor sieve. The only external estimate
on primes is the elementary bound of Ruiz and Sondow \cite{RuizSondow}.

\begin{lemma}[Parameter bounds]
\label{lem:nth-bounds}
The parameters above satisfy
\begin{equation}
\label{eq:nth-bounds}
p_n\leq C_n<(n+1)^2,
\qquad C_n<H_n,
\qquad C_nF_n<D_n,
\qquad F_n<D_n<B_n<A_n.
\end{equation}
In particular $D_n\geq 8$, $B_n>4H_n$, and $\nu_2(A_n)\geq n+1$.
\end{lemma}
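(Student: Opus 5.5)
We verify each inequality in \eqref{eq:nth-bounds} in turn, using only elementary comparisons between polynomials, factorials and powers of $2$, plus one external prime estimate for the first. The only genuinely number-theoretic input is $p_n\leq n(n+1)$. We take this from the elementary bound of Ruiz and Sondow~\cite{RuizSondow}, which gives $p_n\le n(n+1)$ (indeed $p_n<n(\log n+\log\log n)$ for $n\ge 6$). For small $n$ we check it directly: $p_1=2=C_1$, $p_2=3<6$, $p_3=5<12$, and so on. Then $C_n=n^2+n<n^2+2n+1=(n+1)^2$ is immediate.

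For $C_n<H_n=2^{n+1}$, induction suffices. At $n=1$ we have $2<4$. The left side grows by $2(n+1)$ from $n$ to $n+1$, while the right side doubles, and $2^{n+1}\ge 2(n+1)$ holds for all $n\ge1$.

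The main estimate is $C_nF_n<D_n$, that is,
\[
n(n+1)\,(n+1)!<2^{(n+2)^2}.
\]
We bound crudely: $(n+1)!\le (n+1)^{n+1}\le 2^{(n+1)^2}$, using $n+1\le 2^{n+1}$. Also $n(n+1)<2^{2n+3}$. Together these give $C_nF_n<2^{(n+1)^2+2n+3}=2^{(n+2)^2}$. This step is the one to check carefully, since exponent bookkeeping is where an off-by-one could hide. If the crude bound were tight somewhere, we would instead verify $n=1,2$ by hand and induct: $(n+2)^2$ increases by $2n+5$, whereas $C_nF_n$ increases by a factor of at most $(n+2)\cdot\frac{(n+2)}{n}\le 2^{2n+5}$.

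The remaining chain is routine. Since $C_n\geq 2$, we have $F_n<C_nF_n<D_n$. Next, $B_n=D_nH_n=2^{h_n+n+1}>D_n$. Finally, $A_n=B_n^{C_n+1}>B_n$ because $C_n+1\ge 3$ and $B_n>1$.

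For the "in particular" statements:
\begin{itemize}
\item $D_n\ge 2^9\ge 8$.
\item $B_n=D_nH_n>4H_n$ because $D_n>4$.
\item $A_n$ is a power of $2$ with exponent $(C_n+1)(h_n+n+1)\ge n+1$, so $\nu_2(A_n)\ge n+1$.
\end{itemize}
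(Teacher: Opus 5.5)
Your proof is correct and follows essentially the same route as the paper: an explicit prime bound for $p_n\le C_n$, induction for $C_n<H_n$, a crude power-of-two estimate for $C_nF_n<D_n$, and the remaining inequalities from the definitions. Your split $(n+1)!\le 2^{(n+1)^2}$, $n(n+1)<2^{2n+3}$ closes exactly because $(n+1)^2+2n+3=(n+2)^2$, so the fallback induction is unnecessary; but note that $p_n<n(\log n+\log\log n)$ for $n\ge 6$ is the Rosser--Schoenfeld bound, not the Ruiz--Sondow lemma, which gives $p_n<2n\log n+2$ for $n>1$ and needs the paper's check $x+1-2\log x-2/x>0$ for $x\ge 2$ to reach $p_n<n(n+1)$.
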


\begin{proof}
For $n>1$, Ruiz and Sondow prove $p_n<2n\log n+2$
\cite[Lemma~1]{RuizSondow}. The function
\[
x+1-2\log x-\frac2x
\]
is positive at $x=2$ and has positive derivative $1-2/x+2/x^2$ for
$x\geq 2$. Hence $p_n<n(n+1)=C_n$ for $n\geq 2$, while $p_1=C_1=2$. The
inequality $C_n<(n+1)^2$ is immediate.

The inequality $2^{n+1}>n(n+1)$ holds for $n=1,2$ and propagates by
induction, so $C_n<H_n$. Also $n+1\leq 2^n$ gives
\[
C_nF_n=n(n+1)(n+1)!
\leq(n+1)^{n+3}
\leq 2^{n(n+3)}
<2^{(n+2)^2}=D_n.
\]
The remaining inequalities follow from the definitions. Since
$B_n=2^{h_n+n+1}$,
\[
\nu_2(A_n)=(h_n+n+1)(C_n+1)\geq n+1.
\]
\end{proof}

\begin{lemma}[Direct reciprocal packing]
\label{lem:reciprocal-packing}
Let $m\geq 1$, let $F_0$ be a positive common multiple of $1,\ldots,m+1$,
and let $a=2^b$ with $b\geq m+1$ and $a>F_0$. Put $e=a^{m+1}$. Then
\begin{equation}
\label{eq:reciprocal-packing}
\frac{
e^2+1-\bigl((a-1)^{eF_0}\bmod e^2\bigr)
}{e}
=
\sum_{j=1}^{m}\frac{F_0}{j}a^j.
\end{equation}
The sum describes the packed result; the left side computes it with one
exponentiation and one remainder.
\end{lemma}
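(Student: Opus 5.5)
The plan is to evaluate $(a-1)^{eF_0}\bmod e^2$ explicitly with the binomial theorem and show that
\[
(a-1)^{eF_0}\equiv 1-eS\pmod{e^2},
\qquad S=\sum_{j=1}^{m}\frac{F_0}{j}a^j .
\]
After that, the only remaining task is to identify the least nonnegative residue. Put $N=eF_0$. Since $e=a^{m+1}$ is even, $N$ is even, and therefore
\[
(a-1)^N=\sum_{j\ge0}\binom{N}{j}(-1)^j a^j .
\]
Because $e^2=a^{2m+2}$, every term with $j\ge 2m+2$ vanishes modulo $e^2$. I will split the remaining indices into $j=0$, $1\le j\le m+1$, and $m+2\le j\le 2m+1$.

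\textbf{Indices $1\le j\le m+1$.} Write
\[
\binom{N}{j}=\frac{N}{j}\binom{N-1}{j-1}=e\,\frac{F_0}{j}\binom{N-1}{j-1}.
\]
Here $F_0/j$ is an integer because $F_0$ is a common multiple of $1,\ldots,m+1$. The term is $e$ times an integer, so modulo $e^2$ only that integer modulo $e$ matters. To control it, I will write
\[
(-1)^{j-1}\binom{N-1}{j-1}=\prod_{i=1}^{j-1}\Bigl(1-\frac{N}{i}\Bigr).
\]
Each $N/i=e(F_0/i)$ is an integer divisible by $e$, so the product is $\equiv1\pmod e$. Hence the $j$-th term is congruent to $-e(F_0/j)a^j$ modulo $e^2$. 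For $j=m+1$ this term contains $e\cdot a^{m+1}=e^2$ and drops out, which leaves exactly $-eS$.

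\textbf{Indices $m+2\le j\le 2m+1$.} These are the main obstacle, since $F_0/j$ need not be an integer there. I will use the valuation estimate
\[
\nu_2\binom{N}{j}\ge \nu_2(N)-\nu_2(j)\ge b(m+1)-\nu_2(j),
\]
which follows from $j\binom{N}{j}=N\binom{N-1}{j-1}$. Multiplying by $a^j$ adds $bj$ to the valuation. It therefore suffices to have $b(j-m-1)\ge\nu_2(j)$. This holds because $j-m-1\ge1$, $b\ge m+1$, and $\nu_2(j)\le\log_2(2m+1)<m+1$. So all these terms vanish modulo $e^2$, and together with the $j=0$ term this gives the claimed congruence $(a-1)^N\equiv 1-eS\pmod{e^2}$.

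\textbf{Identifying the residue.} Since $F_0\le a-1$,
\[
0<S\le F_0\sum_{j=1}^m a^j\le a^{m+1}-a<e .
\]
Therefore $e^2+1-eS$ lies in $(0,e^2)$, and it is the least nonnegative residue. Substituting it, the numerator on the left of \eqref{eq:reciprocal-packing} is $e^2+1-(e^2+1-eS)=eS$. Dividing by $e$ gives $S$ exactly, which proves the identity and also shows that the division is exact.
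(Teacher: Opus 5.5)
Your proof is correct and follows essentially the paper's route: the product formula $\binom{N-1}{j-1}=\prod_{i<j}(N/i-1)$ gives the exact congruence for the low terms, the identity $j\binom{N}{j}=N\binom{N-1}{j-1}$ gives the $2$-adic bound that kills the tail, and the range check $0<S<e$ identifies the residue. The differences are minor: you dispose of $j=m+1$ through the factor $a^{m+1}=e$ and truncate at $j\le 2m+1$ using $a^{2m+2}=e^2$. The paper instead treats all $j\ge m+1$ uniformly, using $\nu_2(F_0)\ge\nu_2(m+1)$ and $\nu_2(j)\le j-1$.
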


\begin{proof}
Write $\rho=eF_0$ and $\ell=b(m+1)=\nu_2(e)$. For $1\leq j\leq m$,
\[
\frac1e\binom{\rho}{j}
=
\frac{F_0}{j}
\prod_{u=1}^{j-1}\left(\frac{eF_0}{u}-1\right)
\equiv
(-1)^{j-1}\frac{F_0}{j}
\pmod e,
\]
because every $u\leq j$ divides $F_0$. Consequently the $j$-th nonconstant
term of $(1-a)^\rho$ is congruent to
\[
-e\frac{F_0}{j}a^j\pmod{e^2}.
\]

For every $1\leq j\leq\rho$, the identity
$j\binom{\rho}{j}=\rho\binom{\rho-1}{j-1}$ gives
\[
\nu_2\!\left(\binom{\rho}{j}a^j\right)
\geq
\ell+\nu_2(F_0)-\nu_2(j)+bj.
\]
At $j=m+1$ the right side is at least $2\ell$, because $m+1$ divides $F_0$.
If $j=m+1+t$ with $t\geq 1$, then
\[
\nu_2(j)\leq j-1=m+t\leq(m+1)t\leq bt,
\]
so the same lower bound $2\ell$ follows. Every term with $j\geq m+1$
therefore vanishes modulo $e^2$. Since $\rho$ is even,
\[
(a-1)^\rho=(1-a)^\rho
\equiv
1-e\sum_{j=1}^{m}\frac{F_0}{j}a^j
\pmod{e^2}.
\]
Set
\[
c=\sum_{j=1}^{m}\frac{F_0}{j}a^j.
\]
Because $0<F_0/j<a$, this is a base-$a$ integer whose largest occupied
position is $m$, so $0<c<e=a^{m+1}$. Therefore
\[
(a-1)^\rho\bmod e^2=e^2+1-ec,
\]
and solving for $c$ proves \eqref{eq:reciprocal-packing}.
\end{proof}

Apply \cref{lem:reciprocal-packing} with $m=n$, $F_0=F_n$, $a=A_n$, and
$e=E_n$, then subtract the $j=1$ term. This gives
\begin{equation}
\label{eq:nth-C-meaning}
\mathcal R_n=\sum_{j=2}^{n}\frac{F_n}{j}A_n^j.
\end{equation}
For $n=1$ the sum is empty and $\mathcal R_1=0$.

\begin{lemma}[The short divisor sieve]
\label{lem:nth-sieve}
The quantities in \eqref{eq:nth-Q}--\eqref{eq:nth-S} satisfy
\begin{equation}
\label{eq:nth-Q-meaning}
\mathcal Q_n
=\sum_{j=2}^{n}\sum_{i=0}^{C_n}
\floor{\frac ij}B_n^iA_n^j.
\end{equation}
If
\[
t_i=\#\{j:2\leq j\leq n,\ j<i,\ j\mid i\},
\qquad t_0=t_1=0,
\]
then
\begin{equation}
\label{eq:nth-T-meaning}
\Delta_n=\sum_{i=0}^{C_n}t_iB_n^i,
\end{equation}
and
\begin{equation}
\label{eq:nth-PS-meaning}
\mathcal P_n=\sum_{\substack{2\leq i\leq C_n\\ i\ \mathrm{prime}}}B_n^i,
\qquad
\mathcal S_n=\sum_{i=0}^{C_n}\pi(i)B_n^i.
\end{equation}
\end{lemma}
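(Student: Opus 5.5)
The plan is to verify the three displayed identities in order, each time reading the relevant integer in base $B_n$. Within an $A_n$-block, position $i$ has weight $B_n^i$ for $0\leq i\leq C_n$, and $A_n=B_n^{C_n+1}$. At every step I will check that each field coefficient lies in $[0,B_n)$, so that no carries occur. The bounds of \cref{lem:nth-bounds} supply these checks: $C_n<H_n$, $C_nF_n<D_n$, and $B_n=D_nH_n$.

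\textbf{Step 1 ($\mathcal Q_n$).} By \eqref{eq:nth-C-meaning} and $G_1(B_n,C_n+1)=\sum_{i=0}^{C_n}iB_n^i$, the product $\kappa_n\mathcal R_nG_1$ equals $\sum_{j=2}^n\sum_{i=0}^{C_n}\kappa_n(F_n/j)\,i\,B_n^iA_n^j$.

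Put $\delta=\kappa_nF_n-D_n$, so $0\leq\delta<F_n$ by \eqref{eq:nth-kappa}. Write $i=qj+r$ with $0\leq r<j$. The coefficient is then
\[
D_nq+D_n r/j+\delta i/j .
\]
Since $D_nr/j\leq D_n-D_n/j$ and $\delta i/j<F_nC_n/j<D_n/j$, this coefficient lies in $[D_nq,D_n(q+1))$. Because $q\leq C_n<H_n$, it is below $B_n$.

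Hence there are no carries. Bits $h_n,\ldots,h_n+n$ of each field hold exactly $q=\floor{i/j}$, and the bits below $h_n$ are junk. The mask $D_n(H_n-1)G_0(B_n,(C_n+1)(n+1))$ selects precisely bits $h_n$ through $h_n+n$ in every field up to block $n$. After the AND, division by $D_n$ is exact and gives \eqref{eq:nth-Q-meaning}.

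\textbf{Step 2 ($\Delta_n$).} Since $A_n\equiv1\pmod{A_n-1}$, reducing modulo $A_n-1$ collapses the blocks to
\[
\mathcal W_n=\sum_i s_iB_n^i,\qquad s_i=\sum_{j=2}^n\floor{i/j}.
\]
This holds provided the collapsed sum is below $A_n-1$, which follows from $s_i\leq nC_n<B_n$ with only $C_n+1$ fields.

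Next, $(A_n-B_n+1)\mathcal W_n\equiv \mathcal W_n-B_n\mathcal W_n\pmod{A_n}$. Since $s_i$ is nondecreasing in $i$, the difference $\sum_i(s_i-s_{i-1})B_n^i-s_{C_n}A_n$ involves no borrows. Reduction modulo $A_n$ discards only the top term. Moreover $s_i-s_{i-1}=\#\{2\leq j\leq n: j\mid i\}$, which counts $j=i$ exactly when $2\leq i\leq n$. Subtracting $B_n^2G_0(B_n,n-1)=\sum_{i=2}^nB_n^i$ removes precisely these terms, and each such field is at least $1$, so no borrow occurs. This gives \eqref{eq:nth-T-meaning}.

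\textbf{Step 3 ($\mathcal P_n$ and $\mathcal S_n$).} Since $t_i\leq n-1<H_n$, the fields of $\Delta_n+(H_n-1)\mathcal U_n$ are $t_i+H_n-1<2H_n\leq B_n$. Bit $n+1$ of such a field is set iff $t_i\geq1$, so the AND with $H_n\mathcal U_n$, divided by $H_n$, gives $\sum_i[t_i\geq1]B_n^i$.

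A number $i$ with $2\leq i\leq C_n<(n+1)^2$ is composite iff it has a proper divisor $j$ with $2\leq j\leq\sqrt i\leq n$, that is, iff $t_i\geq1$. Also $t_0=t_1=0$. Subtracting this indicator from $\mathcal U_n-1-B_n$ therefore leaves exactly the prime positions. Finally, $\mathcal P_n\mathcal U_n$ has field $i$ equal to the prefix count $\pi(i)\leq C_n<B_n$ for $i\leq C_n$, with no carries. Reduction modulo $A_n$ discards the fields above $C_n$, which gives \eqref{eq:nth-PS-meaning}.

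The main obstacle is Step 1. There one must control the junk from the ceiling $\kappa_n$ so that it never reaches bit $h_n$; this is exactly where the inequality $C_nF_n<D_n$ is needed. The borrow-free subtractions in Step 2 rely on the monotonicity of $s_i$.
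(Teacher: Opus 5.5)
Your proposal is correct and follows the paper's proof essentially step for step. You read the fields of $\kappa_n\mathcal R_nG_1(B_n,C_n+1)$ carry-free, with the ceiling error kept below $D_n$ by $C_nF_n<D_n$; your integer $\delta=\kappa_nF_n-D_n$ and division $i=qj+r$ replace the paper's fractional-part argument. The remaining steps match as well: collapse modulo $A_n-1$, first differences via $A_n-B_n+1\equiv1-B_n$, the $H_n$-bit threshold mask, and prefix sums by the repunit.

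There are two harmless slips. First, $\sqrt i\le n$ fails for $n^2<i\le n(n+1)$, so you should argue $j\le\lfloor\sqrt i\rfloor\le n$ from $\sqrt i<n+1$. Second, $s_i<B_n$ alone only gives $\sum_i s_iB_n^i\le A_n-1$; use $s_0=0$ or $nC_n<D_n<B_n-1$ to get the strict bound.
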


\begin{proof}
In $\mathcal R_n G_1(B_n,C_n+1)$, formula \eqref{eq:nth-C-meaning} shows that
the coefficient in the field indexed by $i+(C_n+1)j$ is
\[
d=\frac{iF_n}{j}.
\]
These fields are distinct. Write $\delta=\kappa_n-D_n/F_n$, so
$0\leq\delta<1$. For $0\leq i\leq C_n$ and $2\leq j\leq n$,
\[
\frac{d\kappa_n}{D_n}
=
\frac ij+\frac{iF_n\delta}{jD_n},
\qquad
0\leq\frac{iF_n\delta}{jD_n}<\frac1j.
\]
The fractional part of $i/j$ is an integral multiple of $1/j$, hence
\[
\floor{\frac{d\kappa_n}{D_n}}=\floor{\frac ij}.
\]
Furthermore,
\[
0\leq d\kappa_n<C_n(D_n+F_n)<D_n(C_n+1)\leq D_nH_n=B_n.
\]
Thus there are no carries between base-$B_n$ fields. Since $D_n$ and $H_n$
are powers of two, the mask $D_n(H_n-1)$ sends a field value
$0\leq x<B_n=D_nH_n$ to $D_n\floor{x/D_n}$. The repunit in \eqref{eq:nth-Q}
covers every field, and division by $D_n$ yields \eqref{eq:nth-Q-meaning}.

Reducing \eqref{eq:nth-Q-meaning} modulo $A_n-1$ replaces every $A_n^j$ by
$1$. Hence the $i$-th base-$B_n$ digit of $\mathcal W_n$ is
\[
w_i=\sum_{j=2}^{n}\floor{\frac ij}.
\]
The bound $w_i\leq nC_n<C_nF_n<D_n<B_n-1$ prevents carries. For $i\geq 1$,
the difference $w_i-w_{i-1}$ counts divisors of $i$ between $2$ and $n$.
Multiplication by $1-B_n$ and reduction modulo $A_n$ forms these first
differences. The nonnegative multiplier $A_n-B_n+1$ is congruent to $1-B_n$
modulo $A_n$, and subtracting $B_n^2G_0(B_n,n-1)$ removes the self-divisor
from positions $2,\ldots,n$. This proves \eqref{eq:nth-T-meaning}.

Every composite $i\leq C_n<(n+1)^2$ has a prime divisor at most $n$ and
strictly less than $i$. Therefore, for $i\geq 2$, $t_i=0$ exactly when $i$
is prime. Also $0\leq t_i\leq n<H_n$. In a base-$B_n$ field, the bit of
weight $H_n$ in $t_i+H_n-1$ is $1$ exactly when $t_i>0$. Since these values
are below $2H_n<B_n$, the second AND in \eqref{eq:nth-P} produces the
composite flags without cross-field carries. Subtracting those flags,
together with the unwanted positions $0$ and $1$, gives the formula for
$\mathcal P_n$.

Finally, multiplication by the repunit
$\mathcal U_n=1+B_n+\cdots+B_n^{C_n}$ forms prefix sums of the prime flags.
These counts are at most $C_n<H_n<B_n$, so reduction modulo
$A_n=B_n^{C_n+1}$ creates no carry between fields. This proves the formula
for $\mathcal S_n$.
\end{proof}

\begin{corollary}[Extracting $\pi(n)$ from the packed sieve]
\label{cor:pi-extraction}
The packed prime flags and prefix counts of \cref{lem:nth-sieve} give
\begin{equation}
\label{eq:pi-from-P}
\pi(n)
=
\bigl(\mathcal P_n\bmod B_n^{n+1}\bigr)\bmod(B_n-1),
\end{equation}
and equivalently
\begin{equation}
\label{eq:pi-from-S}
\pi(n)
=
\floor{\frac{\mathcal S_n}{B_n^n}}\bmod B_n.
\end{equation}
\end{corollary}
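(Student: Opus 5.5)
Both identities follow from the digit descriptions in \cref{lem:nth-sieve}, together with the digit bounds that rule out carries. I will treat $\mathcal P_n$ and $\mathcal S_n$ as base-$B_n$ integers whose digits are known exactly, and read off $\pi(n)$ with the standard digit-extraction operations.

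For \eqref{eq:pi-from-P}, start from \eqref{eq:nth-PS-meaning}. Every digit of $\mathcal P_n$ is $0$ or $1$, and the highest position is at most $C_n$. Since $n\le C_n$, reduction modulo $B_n^{n+1}$ keeps exactly the digits in positions $0,\dots,n$. This gives $\sum_{2\le i\le n,\ i\text{ prime}}B_n^i$. Next, reduce modulo $B_n-1$ using $B_n\equiv1$. The residue is congruent to the number of primes in $[2,n]$, which is $\pi(n)$. It remains to check that this count is already the least residue. We have $\pi(n)\le n<H_n<B_n-1$ by \cref{lem:nth-bounds}, so the reduction returns $\pi(n)$ itself. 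The case $n=1$ gives $0$ on both sides.

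For \eqref{eq:pi-from-S}, use $\mathcal S_n=\sum_{i=0}^{C_n}\pi(i)B_n^i$. Each digit satisfies $0\le\pi(i)\le C_n<H_n<B_n$, so this is the genuine base-$B_n$ expansion. Integer division by $B_n^n$ discards the digits in positions below $n$ exactly, because $\sum_{i<n}\pi(i)B_n^i<B_n^n$. Reducing modulo $B_n$ then isolates the digit in position $n$, namely $\pi(n)$. This digit exists because $n\le C_n$.

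The only point requiring care is the no-carry bound: each digit must be strictly below $B_n$, and below $B_n-1$ for the second reduction in \eqref{eq:pi-from-P}. Both bounds follow from $C_n<H_n$ and $B_n>4H_n$ in \cref{lem:nth-bounds}, so I expect no genuine obstacle.
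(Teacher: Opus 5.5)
Your proposal is correct and follows essentially the same route as the paper. You truncate $\mathcal P_n$ to the prime flags in positions at most $n$, use $B_n\equiv1\pmod{B_n-1}$ and the bound $\pi(n)\le n<B_n-1$, and read \eqref{eq:pi-from-S} as the $n$-th base-$B_n$ digit of $\mathcal S_n$. You also spell out the no-carry digit bounds from \cref{lem:nth-bounds} slightly more explicitly than the paper does.
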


\begin{proof}
By \eqref{eq:nth-PS-meaning}, reduction of $\mathcal P_n$ modulo
$B_n^{n+1}$ leaves exactly
\[
\sum_{\substack{p\leq n\\p\ \mathrm{prime}}}B_n^p.
\]
Since $B_n\equiv1\pmod{B_n-1}$, its remainder modulo $B_n-1$ is the number
of summands, namely $\pi(n)$.  This is the actual remainder because
$0\leq\pi(n)\leq n<B_n-1$.  Formula \eqref{eq:pi-from-S} is simply the
$n$-th base-$B_n$ digit of $\mathcal S_n$.
\end{proof}

The preceding corollary already gives $\pi(n)$ from the construction for
$p_n$, but it carries an unnecessarily long candidate range.  For the
prime-counting function alone, the sieve may stop at $n$.

\begin{theorem}[A dedicated packed program for $\pi(n)$]
\label{thm:pi-packed}
Keep $F_n,D_n,H_n,$ and $B_n$ from
\eqref{eq:nth-parameters-a}--\eqref{eq:nth-parameters-b}, but replace the
candidate cutoff $C_n=n(n+1)$ by
\begin{equation}
\label{eq:pi-cutoff}
C_n^{(\pi)}=n.
\end{equation}
Define
\begin{equation}
\label{eq:pi-packed-parameters}
A_n^{(\pi)}=B_n^{n+1},
\qquad
E_n^{(\pi)}=(A_n^{(\pi)})^{n+1},
\qquad
\mathcal U_n^{(\pi)}=G_0(B_n,n+1),
\end{equation}
and let
\[
\mathcal R_n^{(\pi)},\ \mathcal Q_n^{(\pi)},\
\mathcal W_n^{(\pi)},\ \Delta_n^{(\pi)},\
\mathcal P_n^{(\pi)}
\]
be the quantities defined by
\eqref{eq:nth-C}--\eqref{eq:nth-P} after replacing
$C_n,A_n,E_n,\mathcal U_n$ by
$C_n^{(\pi)},A_n^{(\pi)},E_n^{(\pi)},\mathcal U_n^{(\pi)}$ throughout.
Then
\begin{equation}
\label{eq:pi-packed-answer}
\widehat\pi(n):=\mathcal P_n^{(\pi)}\bmod(B_n-1)=\pi(n).
\end{equation}
The construction stops before the prefix-sum and carry-selection stages and
uses no binomial valuation.
\end{theorem}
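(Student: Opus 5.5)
The plan is to rerun the proof of \cref{lem:nth-sieve} with $C_n$ replaced by $C_n^{(\pi)}=n$, and then finish exactly as in \cref{cor:pi-extraction}. We first verify that every bound used in \cref{lem:nth-bounds} still holds for the smaller cutoff. The candidate range is now $0\leq i\leq n$, and we need $n<(n+1)^2$, $n<H_n$, and $nF_n<D_n$. The last one follows from $C_nF_n<D_n$ because $n\leq C_n$. We also need $\nu_2(A_n^{(\pi)})=(h_n+n+1)(n+1)\geq n+1$, and $F_n<D_n<B_n<A_n^{(\pi)}$.

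Next we apply \cref{lem:reciprocal-packing} with $m=n$, $F_0=F_n$, $a=A_n^{(\pi)}$, and $e=E_n^{(\pi)}$. Its hypotheses are satisfied: $a$ is a power of two with exponent at least $n+1$, and $a>F_n$. This gives $\mathcal R_n^{(\pi)}=\sum_{j=2}^n (F_n/j)(A_n^{(\pi)})^j$.

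The field argument of \cref{lem:nth-sieve} then goes through verbatim with $C_n+1$ replaced by $n+1$. Multiplying by $\kappa_n G_1(B_n,n+1)$ places the coefficient $d=iF_n/j$ in distinct base-$B_n$ fields. The floor identity $\floor{d\kappa_n/D_n}=\floor{i/j}$ uses only $iF_n\delta/(jD_n)<1/j$, which holds because $iF_n<D_n$. The no-carry bound $d\kappa_n<n(D_n+F_n)<D_nH_n$ holds as before. The mask $D_n(H_n-1)G_0(B_n,(n+1)(n+1))$ covers all $(n+1)^2$ fields. So $\mathcal Q_n^{(\pi)}=\sum_{j=2}^n\sum_{i=0}^n\floor{i/j}B_n^i(A_n^{(\pi)})^j$.

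We then reduce modulo $A_n^{(\pi)}-1$ and form first differences modulo $A_n^{(\pi)}=B_n^{n+1}$. After subtracting $B_n^2G_0(B_n,n-1)$, this gives $\Delta_n^{(\pi)}=\sum_{i=0}^n t_iB_n^i$. We will check two points carefully. First, the first-difference step is clean at the top field: multiplying by $1-B_n$ creates a term at position $n+1$, and reduction mod $B_n^{n+1}$ discards exactly that term. Second, the subtraction is nonnegative: position $j\in[2,n]$ has digit $w_j-w_{j-1}\geq1$ because $j\mid j$.

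For the sieve step, every composite $i\leq n$ has a prime divisor $j$ with $2\leq j<i\leq n$. Since $t_i\leq n<H_n$, the second AND with $H_n\mathcal U_n^{(\pi)}$ extracts exactly the composite flags. Hence $\mathcal P_n^{(\pi)}=\sum_{p\leq n,\ p\text{ prime}}B_n^p$.

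Finally, $B_n\equiv1\pmod{B_n-1}$, so $\mathcal P_n^{(\pi)}\equiv\pi(n)$. Because $0\leq\pi(n)\leq n<B_n-1$, this residue is the actual remainder, which proves \eqref{eq:pi-packed-answer}. The statement that no prefix sums or binomial valuation are used is then immediate from the definitions.

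The main obstacle is not conceptual but bookkeeping. We must confirm that each inequality invoked in the original proof depended on $C_n$ only monotonically, so that shrinking $C_n$ to $n$ is harmless. We must also confirm that the boundary-field truncations, at the top position $n$ and in the subtraction of the self-divisors, still behave correctly when the cutoff equals the largest trial divisor. The edge case $n=1$ deserves a separate check: there $\mathcal R=0$, $\Delta=0$, $\mathcal P=\mathcal U-1-B_n=0$, and $\pi(1)=0$.
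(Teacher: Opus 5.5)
Your proposal is correct and follows the same route as the paper. Both rerun the sieve lemma with cutoff $n$, using $nF_n<C_nF_n<D_n$, and then reduce the prime-flag integer $\mathcal P_n^{(\pi)}$ modulo $B_n-1$. You also check explicitly several points that the paper's short proof leaves implicit: the discarded term at position $n+1$ in the first-difference step, the nonnegative self-divisor subtraction at positions $2,\dots,n$, and the case $n=1$.
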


\begin{proof}
All bounds used in \cref{lem:nth-sieve} remain valid after replacing
$C_n$ by the smaller cutoff $n$; in particular
$nF_n<C_nF_n<D_n$.  The reciprocal-packing argument and the quotient-field
argument are therefore unchanged.  The first-difference step gives
proper-divisor counts for every $0\leq i\leq n$.  Every composite
$i\leq n$ has a proper divisor between $2$ and $n$, so the same binary mask
produces
\[
\mathcal P_n^{(\pi)}
=
\sum_{\substack{2\leq i\leq n\\i\ \mathrm{prime}}}B_n^i.
\]
Reducing modulo $B_n-1$ replaces each power $B_n^i$ by $1$.  The result is
$\pi(n)$, and it lies below $B_n-1$, so the least nonnegative remainder is
exactly $\pi(n)$.
\end{proof}

\begin{lemma}[Rank selection by carries]
\label{lem:nth-carries}
Let $B,H$ be powers of two with $B>4H$, let
\[
S_0=\sum_{i=0}^{K}c_iB^i,
\qquad 0\leq c_i<H,
\]
and put $R=G_0(B,K+1)$. If $1\leq r<H$, then
\begin{equation}
\label{eq:nth-carry-selector}
\nu_2\!\binom{S_0+(2H-r)R}{HR}
=
\#\{i\in\{0,\ldots,K\}:c_i\geq r\}.
\end{equation}
\end{lemma}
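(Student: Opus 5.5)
The plan is to apply Kummer's theorem. For nonnegative integers $x\geq y$, the valuation $\nu_2\binom{x}{y}$ equals the number of borrows when $y$ is subtracted from $x$ in base $2$. Equivalently, it is the number of carries when $y$ and $x-y$ are added in base $2$. Here $x=S_0+(2H-r)R$ and $y=HR$, so
\[
x-y=S_0+(H-r)R=\sum_{i=0}^{K}(c_i+H-r)B^i .
\]
Since $r\geq1$ and $c_i<H$, every field value $c_i+H-r$ lies in $[0,2H-1)$. Also $y=\sum_i HB^i$. So we add, field by field, the value $H$ to $c_i+H-r$, both written in binary inside a base-$B$ field.

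First I will check that the fields do not interact. The sum in field $i$ is $c_i+2H-r<3H<B/4\cdot 3<B$. No carry therefore leaves a field, and no carry enters the next field from below. Because $B$ and $H$ are powers of two, each field occupies a disjoint block of bit positions. Hence the total number of binary carries is the sum over $i$ of the carries in the single addition $H+(c_i+H-r)$. This also confirms $x\geq y$, so the binomial coefficient is an ordinary nonnegative one.

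Next I count carries inside one field. Put $u=c_i+H-r$, so $0\leq u<2H$, and write $H=2^h$. Adding $2^h$ to $u$ creates no carry in bits below $h$. At bit $h$, a carry occurs exactly when bit $h$ of $u$ is $1$, that is, when $u\geq H$, which is equivalent to $c_i\geq r$. If that carry occurs, bit $h+1$ of $u$ is $0$ because $u<2H$. The carry is absorbed there and does not propagate further.

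So each field contributes exactly one carry if $c_i\geq r$ and none otherwise. Summing over $i$ gives \eqref{eq:nth-carry-selector}. The main point to get right is the no-propagation argument, both within a field (using $u<2H$) and between fields (using $B>4H$). I will phrase Kummer's theorem as carries in the addition $y+(x-y)$ so that this bookkeeping is transparent.
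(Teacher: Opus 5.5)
Your proposal is correct and follows the paper's proof step for step. It applies Kummer's theorem to the addition of $HR$ and $S_0+(H-r)R$, finds one carry from the $H$ bit in field $i$ exactly when $c_i\geq r$, and uses $c_i+2H-r<3H<B$ to rule out carries between fields, with your remark that the carry is absorbed at the $2H$ bit (since $c_i+H-r<2H$) only spelling out what the paper states in one line.
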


\begin{proof}
For nonnegative integers $u,v$, Kummer's theorem \cite{Kummer} identifies
\[
\nu_2\binom{u+v}{u}
\]
with the number of binary carries in the addition of $u$ and $v$.
Equivalently, this follows from Legendre's identity
$\nu_2(k!)=k-\HW(k)$, where $\HW(k)$ is the number of ones in the binary
expansion of $k$.

Take
\[
u=HR,
\qquad
v=S_0+(H-r)R.
\]
In the $i$-th base-$B$ field one adds $H$ to $H-r+c_i$. If $c_i<r$, the
latter is below $H$ and there is no carry. If $c_i\geq r$, it is $H+d$
with $0\leq d<H$, and exactly one carry occurs from the $H$ bit to the $2H$
bit. Since every field remains below $3H<B$, carries never pass between
base-$B$ fields. This proves \eqref{eq:nth-carry-selector}.
\end{proof}

\begin{proof}[Proof of \cref{thm:nth-packed}]
Apply \cref{lem:nth-carries} with
\[
K=C_n,\qquad B=B_n,\qquad H=H_n,\qquad R=\mathcal U_n,
\qquad c_i=\pi(i),\qquad r=n.
\]
Exactly the positions
\[
i=p_n,p_n+1,\ldots,C_n
\]
have $\pi(i)\geq n$, so their number is $C_n-p_n+1$. Therefore
\[
\nu_2\!\binom{
\mathcal S_n+(2H_n-n)\mathcal U_n
}{H_n\mathcal U_n}
=C_n-p_n+1,
\]
and \eqref{eq:nth-answer} gives $\widehat{p}_n=p_n$. The preceding lemmas
also establish every exact division and nonnegativity assertion used in the
construction.
\end{proof}

The same packed prime flags also give an exact count of twin-prime
pairs. We use the convention
\[
\pi_2(n)=\#\{p\geq2:p\text{ and }p+2\text{ are prime},\ p+2\leq n\},
\]
so both members of each pair must lie at most $n$.
The construction below takes only $n$ as input and uses the explicit
formulas of \cref{thm:pi-packed}.

\begin{theorem}[A fixed program for twin-prime counting]
\label{thm:twin-packed}
For $n\geq1$, let $\mathcal P_n^{(\pi)}$ be the integer defined by
the fixed formulas in \cref{thm:pi-packed}, and retain
\[
B_n=D_nH_n=2^{(n+2)^2+n+1}.
\]
Define
\begin{equation}
\label{eq:twin-packed-answer}
\widehat\pi_2(n)
=
\left(
\mathcal P_n^{(\pi)}
\bitand
\floor{\frac{\mathcal P_n^{(\pi)}}{B_n^2}}
\right)\bmod(B_n-1).
\end{equation}
Then
\[
\widehat\pi_2(n)=\pi_2(n).
\]
After substituting the arithmetic-term representations of the
auxiliary operations, \eqref{eq:twin-packed-answer} is a fixed
arithmetic term in $n$. Its defining expression requires no bounded
summation or product operator, recurrence, search, or primality test.
The packed construction uses three bitwise AND operations in total
and requires neither prefix sums nor a binomial carry selector.
\end{theorem}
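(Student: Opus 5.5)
The proof rests on the explicit description of the packed prime flags established in the proof of \cref{thm:pi-packed}:
\[
\mathcal P_n^{(\pi)}=\sum_{\substack{2\leq i\leq n\\ i\ \mathrm{prime}}}B_n^i .
\]
Because $B_n$ is a power of two, each term $B_n^i$ is a single set bit, at bit position $i\log_2 B_n$. So $\mathcal P_n^{(\pi)}$ is a sparse binary integer whose set bits are in bijection with the primes $i\leq n$. Dividing by $B_n^2$ and taking the floor is a right shift by $2\log_2 B_n$ bits. Terms with $i\geq2$ are all divisible by $B_n^2$, so no term is truncated, and
\[
\floor{\mathcal P_n^{(\pi)}/B_n^2}=\sum_{\substack{2\leq i\leq n\\ i\ \mathrm{prime}}}B_n^{i-2}.
\]

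Next I compute the AND. Both operands are sums of distinct powers of $B_n$ with coefficients in $\{0,1\}$. Their binary expansions are therefore supported on positions that are multiples of $\log_2 B_n$. The AND has a bit at position $m\log_2 B_n$ exactly when $m$ is prime and $m+2$ is prime, with $m\geq0$ and $m+2\leq n$. This gives
\[
\mathcal P_n^{(\pi)}\bitand\floor{\frac{\mathcal P_n^{(\pi)}}{B_n^2}}
=\sum_{\substack{p\ \mathrm{prime},\ p+2\leq n\\ p+2\ \mathrm{prime}}}B_n^{p}.
\]
The only point to check is the index range. A surviving bit at exponent $m$ requires $m+2\in[2,n]$ and prime, and also $m\in[2,n]$ and prime. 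Hence $m\geq2$, so $m$ itself is a prime, and $m+2\leq n$. This matches the convention for $\pi_2(n)$ exactly.

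Finally, I reduce modulo $B_n-1$. Since $B_n\equiv1\pmod{B_n-1}$, every power $B_n^p$ is congruent to $1$, so the residue is congruent to the number of summands, $\pi_2(n)$. This is the actual least remainder because $0\leq\pi_2(n)\leq n<B_n-1$, which follows from \cref{lem:nth-bounds} since $B_n>H_n>n+1$. The same argument as in \cref{cor:pi-extraction} applies.

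For the structural assertions, $\mathcal P_n^{(\pi)}$ already uses two AND operations, and \eqref{eq:twin-packed-answer} adds one more, for three in total. No prefix sum $\mathcal S_n$ or binomial selector appears. Substituting the arithmetic term for $\AND$ from \cref{thm:and-term}, together with the subroutines for factorial and the geometric sums, yields a fixed arithmetic term. I see no real obstacle here. The only delicate point is confirming that the shift introduces no spurious bits from terms with $i<2$. Those terms are absent by the formula for $\mathcal P_n^{(\pi)}$, because the positions $0$ and $1$ were explicitly subtracted in \eqref{eq:nth-P}.
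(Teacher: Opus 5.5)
Your proof is correct and follows essentially the same route as the paper: you take the single-bit digit description of $\mathcal P_n^{(\pi)}$ from \cref{thm:pi-packed}, note that division by $B_n^2$ is an exact shift by two base-$B_n$ fields, identify the AND as $\sum B_n^{p}$ over twin pairs with $p+2\leq n$, and reduce modulo $B_n-1$ using $\pi_2(n)\leq n<B_n-1$. You also handle the structural claims the same way: two ANDs come from $\mathcal P_n^{(\pi)}$ and the final step adds a third, no prefix sums or carry selector occur, and substituting \cref{thm:and-term} and the other subroutines gives a fixed term.
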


\begin{proof}
We first prove the counting identity and then verify the
arithmetic-term assertion.

By \cref{thm:pi-packed}, the explicitly constructed integer satisfies
\begin{equation}
\label{eq:twin-packed-prime-flags}
\mathcal P_n^{(\pi)}
=\sum_{\substack{2\leq p\leq n\\p\ \mathrm{prime}}}B_n^p.
\end{equation}
This identity describes the digits of the integer already defined
by the packed formulas; the sum is used only in the proof.

Since $B_n$ is a power of $2$, each nonzero base-$B_n$ digit in
\eqref{eq:twin-packed-prime-flags} is a single set bit. Division
by $B_n^2$ shifts every such bit by two base-$B_n$ positions.
In fact, the division is exact, because every exponent in
\eqref{eq:twin-packed-prime-flags} is at least $2$; this also
holds when the integer is $0$. Consequently,
\[
\floor{\frac{\mathcal P_n^{(\pi)}}{B_n^2}}
=\sum_{\substack{2\leq p\leq n\\p\ \mathrm{prime}}}B_n^{p-2}.
\]
The bit in position corresponding to $B_n^m$ occurs in both
integers precisely when $m$ and $m+2$ are prime and $m+2\leq n$.
It follows that
\begin{equation}
\label{eq:twin-packed-pair-flags}
\mathcal P_n^{(\pi)}
\bitand
\floor{\frac{\mathcal P_n^{(\pi)}}{B_n^2}}
=\sum_{\substack{p,\,p+2\ \mathrm{prime}\\p+2\leq n}}B_n^p.
\end{equation}
Modulo $B_n-1$, every summand on the right is congruent to $1$.
Their number is $\pi_2(n)$, and
\[
0\leq\pi_2(n)\leq n<B_n-1.
\]
Thus the least nonnegative remainder is exactly $\pi_2(n)$,
proving \eqref{eq:twin-packed-answer}, including the cases
$1\leq n\leq4$, when the count is $0$.

To verify that the construction is a fixed arithmetic term,
we start with its defining formulas, rather than the digit
identities \eqref{eq:twin-packed-prime-flags} and
\eqref{eq:twin-packed-pair-flags}. The integer
$\mathcal P_n^{(\pi)}$ is defined by the fixed list of expressions
\eqref{eq:nth-C}--\eqref{eq:nth-P}, with the substitutions in
\cref{thm:pi-packed}. The number of these expressions is
independent of $n$.

The geometric expressions $G_0(x,m)$ and $G_1(x,m)$ have the closed forms in \eqref{eq:twin-packed-geometric-terms}.

Replace the factorial $(n+1)!$ by its fixed arithmetic-term
representation cited in Section~\ref{sec:arith-context}. Replace
each bitwise AND by \cref{thm:and-term}, and replace the
remaining auxiliary operations in that representation by the
terms cited in the same section. Remainders are eliminated
using
\[
a\bmod b=a-b\floor{\frac ab},\qquad a\geq0,\quad b\geq1.
\]
All other exact quotients can likewise be written as floor
divisions. The remaining expressions use arithmetic operations,
exponentiation, and floor division.

Expanding the auxiliary calls first gives a straight-line program in the
six-operation basis. Successively substituting all register definitions
then produces a single expression of fixed syntactic length in $n$.
The parameter $n$ changes the values of its subexpressions and
exponents, but not the number of operations in its syntax.
In particular, none of the prime-indexed sums displayed in this
proof is an operation in that expression. Finally, the
construction of $\mathcal P_n^{(\pi)}$ uses two AND operations,
and \eqref{eq:twin-packed-answer} adds one, establishing the
stated total.
\end{proof}

For example, $\pi_2(20)=4$, corresponding to
$(3,5),(5,7),(11,13)$, and $(17,19)$.
If instead twin pairs are counted by the bound $p\leq n$ on
their smaller member, the resulting counting function is
$\pi_2(n+2)$ and is obtained by replacing $n$ by $n+2$ in
\eqref{eq:twin-packed-answer}.

\subsection{Arithmetic-term closure and integer sizes}
\label{sec:nth-strict}

\cref{thm:nth-packed} is intentionally written with standard operations.
Turning it into a strict arithmetic term requires only substitution of
established results.
The only auxiliary operation specific to the present construction is bitwise
AND. \cref{thm:and-term}, proved in \cref{app:additional-terms}, supplies
an arithmetic representation of it. Another arithmetic term proved in the appendix produces $\lfloor \log_2 n \rfloor$.

\begin{corollary}[Strict arithmetic terms for $p_n$ and $\pi(n)$]
\label{cor:nth-strict}
For every $n\geq1$, both $\widehat{p}_n$ in \eqref{eq:nth-answer} and
$\widehat\pi(n)$ in \eqref{eq:pi-packed-answer} can be expanded into fixed
arithmetic terms in $n$.  Replace every bitwise $\AND$ by
\cref{thm:and-term} and replace the remaining standard auxiliary operations
by the cited arithmetic-term representations.  The resulting terms have
fixed syntactic length and return $p_n$ and $\pi(n)$, respectively.
\end{corollary}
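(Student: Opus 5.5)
The plan is to combine the correctness statements already proved with a purely syntactic substitution argument, as in the closing part of the proof of \cref{thm:twin-packed}. By \cref{thm:nth-packed} and \cref{thm:pi-packed}, the fixed lists of definitions \eqref{eq:nth-parameters-a}--\eqref{eq:nth-answer} and \eqref{eq:pi-cutoff}--\eqref{eq:pi-packed-answer} return $p_n$ and $\pi(n)$ for every $n\geq1$. The number of these definitions does not depend on $n$, and every subtraction and quotient in them is nonnegative or exact on its domain. So it only remains to show that each auxiliary operation can be replaced by a fixed expression in the primitive basis without changing any value.

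I would go through the auxiliary operations one by one. The geometric sums $G_0,G_1$ are replaced by their closed forms \eqref{eq:twin-packed-geometric-terms}. Their numerators are grouped so that the truncated subtraction equals the true one, and the divisions are exact, so they may be written as floor divisions. The factorial $F_n=(n+1)!$ is replaced by the cited term $\floor{(8^{m^2})^m/\binom{8^{m^2}}{m}}$ with $m=n+1$, and the binomial coefficient inside it by its cited term. The valuation $\nu_2$ and the binomial coefficient in \eqref{eq:nth-answer} are replaced by the terms of \cref{sec:arith-context}. These use $\gcd$, which has its own cited term. Each bitwise $\AND$ (two in $\widehat p_n$ and two in $\widehat\pi(n)$) is replaced by \cref{thm:and-term}. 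Finally, each remainder is rewritten as $a\dotminus b\floor{a/b}$, with $b\geq1$ guaranteed by the bounds of \cref{lem:nth-bounds}.

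The point to check carefully is that every substituted term is applied only inside its domain of validity. For example, the moduli must be positive, and the $\AND$ and $\nu_2$ terms must receive nonnegative (respectively positive) arguments. For $\nu_2$, the argument is a binomial coefficient with upper entry at least the lower entry, so it is positive. The arguments of $\AND$ are nonnegative integers by \cref{lem:nth-sieve}. The divisors $D_n$, $H_n$, $E_n$, $A_n-1$, $B_n-1$ and $F_n$ are all at least $1$ by \cref{lem:nth-bounds}. I would also check the degenerate case $n=1$, where $\mathcal R_1=0$ and $G_0(B_n,0)=0$, and confirm that the cited terms still evaluate correctly at those arguments.

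Once every auxiliary call is expanded, the result is a straight-line program in the six-operation basis of fixed length. Substituting each register definition into its later uses produces a single expression whose syntax is independent of $n$; only the values depend on $n$. I expect the main obstacle to be exactly this domain bookkeeping for the cited subroutine terms, especially \cref{thm:and-term}, whose hypotheses must be matched against the actual sizes of the packed integers. The correctness of the prime computation itself is already supplied by the earlier results.
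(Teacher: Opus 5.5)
Your proposal is correct and follows the paper's own argument. Correctness comes from \cref{thm:nth-packed,thm:pi-packed}, and the fixed terms arise by value-preserving substitution of \cref{thm:and-term} and of the cited terms for factorial, binomial coefficient, $\gcd$ and $\nu_2$. Your domain checks are a more explicit version of the paper's phrase ``on the arguments used here.''

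The one item missing from your list is the $\floor{\log_2(a+b+1)}$ inside \cref{thm:and-term}. The paper covers it by citing an integer-logarithm term, and it disappears entirely if you take $r=a+b+1$. Apart from that, \cref{thm:and-term} holds for all $a,b\geq0$, so there are no size hypotheses to match against the packed integers.
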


\begin{proof}
\cref{thm:nth-packed,thm:pi-packed} prove the two packed identities, and
\cref{thm:and-term} proves the required identity for every binary mask.  The
cited results prove that factorial, binomial coefficient, $\gcd$, $2$-adic
valuation, and integer logarithm admit arithmetic-term representations on
the arguments used here.  Substitution preserves every intermediate value.
The dedicated $\pi(n)$ term is shorter: it stops at the packed prime flags
and does not use the prefix-sum or binomial carry selector.
\end{proof}

\begin{remark}[Size]
\label{rem:nth-size}
The arithmetic operation counts are not bit-complexity bounds. The
largest modulus in the reciprocal-packing step is $E_n^2$, whose bit length
is
\[
2(h_n+n+1)(C_n+1)(n+1)+1.
\]
With $h_n=(n+2)^2$ this is $O(n^5)$. If one permits the more compact
evaluation choice
\[
h_n=1+\floor{\log_2(C_nF_n)},
\]
which still gives $2^{h_n}>C_nF_n$, the same packed construction has an
$E_n^2$ modulus of $O(n^4\log n)$ bits. These bounds concern an optimized
evaluation of the packed construction, not literal materialization of the
cited arithmetic-term expansions of the standard operations or of
\cref{thm:and-term}.

For the dedicated $\pi(n)$ construction of \cref{thm:pi-packed}, the
largest reciprocal-packing modulus is $(E_n^{(\pi)})^2$, with bit length
\[
2(h_n+n+1)(n+1)^2+1.
\]
Thus the fully arithmetic choice $h_n=(n+2)^2$ gives $O(n^4)$ bits.  With
the compact choice
\[
h_n=1+\floor{\log_2(nF_n)},
\]
the corresponding bound is $O(n^3\log n)$ bits.  This reduction reflects
the fact that prime counting needs candidates only through $n$ and requires
no rank-selection stage.

Literal substitution of \cref{thm:and-term} is far larger still. Its
exponent $m=2^a$ gives the modulus $X^m-2$ a bit length of order $\ell4^a$,
which is exponential in the value of the argument $a$ and hence doubly
exponential in its bit length. This applies to both AND operations in the
packed constructions and to the update $U-\AND(U,V)$ of the recurrence.
Since AND is symmetric, one may take $a$ to be the smaller argument, but the
growth remains of this type. The program length is unaffected. The log-free AND program in
Appendix~\ref{sec:slp-aux} uses a larger admissible parameter to reduce
its operation count; it also has exponential intermediate bit lengths
in the numerical values of its inputs.
\end{remark}

Prunescu and Shunia obtain $p_n$ and $\pi(n)$ by a hypercube encoding of a
number-theoretic predicate \cite{PrunescuShuniaPrime}. They ask whether a
simpler arithmetic term for $p_n$ exists, and whether both functions can be
constructed without the hypercube method.  The present sieve is packed
directly: reciprocal fields produce integer quotients, first differences
count proper divisors, and a binary mask creates prime flags.  For $\pi(n)$
the construction can stop there; for $p_n$, prefix sums form the values
$\pi(i)$ and one binomial valuation selects the first index of rank $n$.
The terms are still enormous, but every packed object is visible in the
proof and the construction is not a hypercube count.

\section{Conclusion}
\label{sec:closing}

The autonomous state $T_k$ stores both a reduced-residue pattern and the
control data for its own update. Its binary expansion therefore carries
the sieve together with the information needed to advance it. Its valuation is the next odd prime, and its first
two occupied positions determine the following prime gap. The related
state $Z_x$ synchronizes wheel updates with the ordinary integers and
satisfies $\nu_2(Z_x)=\pi(x)$.

The direct packed sieve starts from $n$. It constructs a list
of proper-divisor counts, converts it into prime flags, and either sums
those flags or selects the first index of rank $n$ by counting carries.
Shifting and intersecting the same flags counts twin-prime pairs.
The straight-line formulations make the size of these descriptions
precise: their operation counts include arithmetic implementations of
all auxiliary calls and allow computed registers to be reused.

Unfolding these programs recovers fixed arithmetic terms. Their short
instruction lists do not make them efficient prime algorithms: the
intermediate integers can be enormous, while ordinary sieving remains
far more practical \cite[\S1.5]{HardyWright}. The distinctions between
program length, expanded term size, and intermediate bit length are
therefore essential. Further questions concern reducing the number of
instructions, the packed candidate range, and the integer sizes required
by the arithmetic implementation of AND.

\appendix
\section{Arithmetic subroutines and their proofs}
\label{app:additional-terms}

This appendix collects arithmetic-term representations of auxiliary
operations.  The first, an arithmetic term for bitwise AND, is used in the
main text: it removes bitwise AND as a primitive from both recurrences and
from the packed terms for $p_n$ and $\pi(n)$.  The remaining two results show
that the binary logarithm $\floor{\log_2}$ also has an elementary
arithmetic-term construction: an arithmetic term for integer roots yields
one for $\floor{\log_2}$.  This operation occurs in the recurrence, through
$L=\floor{\log_2T_k}-1$, and in the AND term, through the parameter $r$.
The integer-root term also proves a formula conjectured in
\cite{ShuniaKronecker}.  Both results use the same general idea as the AND
term: an analytic or algebraic operation is replaced by a fixed modular
expression.

\subsection{Reusable arithmetic subroutines}
\label{sec:slp-aux}

We give schedules for the auxiliary operations used in our programs.
The assignments in each display are evaluated in their written order,
row by row. Parentheses determine the operations within each assignment,
and a computed register may be reused at no cost.
The six charged operations are addition, truncated subtraction,
multiplication, floor division, exponentiation, and remainder. In this
subsection the notation $\mathsf{B}_+(N,j)$ is restricted to $N\geq1$,
whereas $\mathsf{B}(N,j)$ also allows $N=0$.

\paragraph{Gcd: $13$ operations.}
For $a,b\geq1$, compute
\begin{align*}
 c&=ab,& B&=5^c,& s&=c+a+b,\\
 P&=B^s,& Q&=(B^a\mathbin{\dot{-}}1)
                     (B^b\mathbin{\dot{-}}1),\\
 \mathsf{D}(a,b)&=
 \left(\left\lfloor P/Q\right\rfloor\bmod B\right)
       \mathbin{\dot{-}}1.
\end{align*}
The costs are $1+1+2+1+5+3=13$.
This is the gcd formula of \cite{PrunescuShuniaGCD}, with the common
power $5^{ab}$ computed once. Its output is $\gcd(a,b)$.
Both factors in $Q$ are positive.

\paragraph{Valuation: $19$ operations.}
For $n\geq1$, compute
\[
 B=2^n,\qquad g=\mathsf{D}(n,B),\qquad u=B\mathbin{\dot{-}}1,
 \qquad
 \mathsf{V}(n)=\left\lfloor(g^n\bmod u^2)/u\right\rfloor.
\]
The cost is $1+13+1+4=19$.
To verify this valuation formula, write $v=\nu_2(n)$, so that $g=2^v$ and
$g^n=(u+1)^v$. For $n\geq2$, one has $v<n$ and
$1+vu<u^2$, hence the remainder modulo $u^2$ is $1+vu$.
The final quotient is $v$. For $n=1$, the displayed program gives
$0$ directly. The register $g=2^{\mathsf{V}(n)}$ is available if a
caller also needs it.

\paragraph{Binomial coefficients: $6$ or $7$ operations.}
For $N\geq1$ and $0\leq j\leq N$, put
\[
 B=2^N,\qquad P=(B+1)^N,\qquad Q=B^j,\qquad
 \mathsf{B}_+(N,j)=\left\lfloor P/Q\right\rfloor\bmod B.
\]
The cost is $1+2+1+2=6$. Every coefficient in $(1+X)^N$
is smaller than $2^N$, so extraction of its $j$-th base-$B$ digit
returns $\binom Nj$. For a version $\mathsf{B}(N,j)$ defined also
when $N=j=0$, replace the first assignment by $B=2^{N+1}$.
This costs $7$ operations and gives the same binomial coefficient
throughout $0\leq j\leq N$.

\paragraph{Factorial: $10$ operations.}
For $n\geq0$, compute
\[
 q=n^2,\qquad b=8^q,\qquad z=b^n,\qquad
 C=\mathsf{B}_+(b,n),\qquad
 \mathsf{F}(n)=\left\lfloor z/C\right\rfloor.
\]
The cost is $1+1+1+6+1=10$.
The identity $\mathsf{F}(n)=n!$ is the factorial formula of
\cite{PrunescuSaurasFactorial}. Its binomial call is valid because
$b\geq1$ and $n\leq b$, including $n=0$.

\paragraph{Hamming weight: $26$ operations, including zero.}
For $n\geq0$, compute
\begin{align*}
 t&=2n,& e&=t+1,& B&=2^e,\\
 C&=\left\lfloor (B+1)^t/B^n\right\rfloor\bmod B,\\
 g&=\mathsf{D}(C,B),& u&=B\mathbin{\dot{-}}1,\\
 \mathsf{H}(n)&=\left\lfloor(g^e\bmod u^2)/u\right\rfloor.
\end{align*}
The first two lines cost $8$ operations; the gcd costs $13$;
and the final two assignments cost $5$. Thus the total is $26$.
The binomial extraction gives $C=\binom{2n}{n}$, including $C=1$
at $n=0$. Kummer's identity gives
$\nu_2(C)=\HW(n)<e$, so $g=2^{\HW(n)}$. The same modular
extraction as above returns $\HW(n)$ for $n\geq1$. At $n=0$
the program gives $u=1$ and output $0$. In particular, no branch
or undefined valuation at zero occurs.

\paragraph{Bitwise AND: $34$ operations.}
The proof of \cref{thm:and-term} only requires that
$\ell=2^r-1\geq a+b$ and $r\geq1$. Consequently we may take
$r=a+b+1$, replacing the logarithm by a larger arithmetic width.
For $a,b\geq0$, the program $\mathsf{A}(a,b)$ is
\begin{align*}
 r&=a+b+1,& \ell&=2^r\mathbin{\dot{-}}1,& m&=2^a,\\
 X&=2^{\ell m+a+2},&Y&=(X+1)^\ell+1,\\
 P&=(1+XY)^a,& Q&=(1+Y)^{\ell\mathbin{\dot{-}}a},& W&=Y^b,\\
 J&=\left\lfloor PQ/W\right\rfloor\bmod Y,\\
 E&=X^m\mathbin{\dot{-}}2,& M&=2m,&
 R&=(J^m\bmod E)\bmod M,\\
 u&=M\mathbin{\dot{-}}1,& e&=a+1,&
 \mathsf{A}(a,b)&=\left\lfloor(R^e\bmod u^2)/u\right\rfloor.
\end{align*}
The successive row costs are $5,7,7,3,6,6$, for a total of
$34$. The first five rows are the construction of
\cref{thm:and-term} with the permitted larger value of $r$.
Its proof shows that $R=2^d$, where $d=a\bitand b$ and
$0\leq d\leq a$. Since $M=2^{a+1}$, the last row extracts
$d$ from $R^{a+1}=(u+1)^d$. If $a\geq1$, then
$1+du<u^2$ and the extraction is exact; if $a=0$, then
$d=0$ and $u=1$, so the output is again correct. Therefore
$\mathsf{A}(a,b)=a\bitand b$ for every $a,b\geq0$.

This schedule favors operation count over intermediate size.
The logarithmic choice of $r$ in \cref{thm:and-term} gives
smaller intermediate integers. We retain that theorem and its proof
because it describes this alternative without altering the packed-sieve
or recurrence arguments.

\paragraph{Integer roots: $12$ operations.}
Under the hypotheses of \cref{thm:integer-root}, compute
\begin{align*}
 h&=2m,& s&=hn,& b&=n^s,& D&=b^m\mathbin{\dot{-}}n,\\
 x&=b+1,& Z&=x^s,& P&=(Zx)\bmod D,& Q&=Z\bmod D,\\
 \mathsf{R}(n,m)&=\left\lfloor P/Q\right\rfloor
                    \mathbin{\dot{-}}1.
\end{align*}
The costs are $5+5+2=12$.
Here $Zx=(b+1)^{s+1}$, so the output is exactly the expression
proved in \cref{thm:integer-root}. In particular $Q>0$.
We write $\mathsf{R}_h(n,m,h)$ for the same schedule with its
first assignment omitted when $h=2m$ is supplied; it costs $11$.
Two root calls with the same index therefore cost $23$ operations
when the register $h$ is shared.

\paragraph{Binary logarithm: $35$ operations for $n\geq2$.}
The program implementing \cref{thm:binary-log-root} is
\begin{align*}
 k&=3n^2,& Q&=(16k^2)(n+1),& W&=Q^k,\\
 N_1&=nW,&N_2&=2W,&h&=2k,\\
 u&=\mathsf{R}_h(N_1,k,h),&v&=\mathsf{R}_h(N_2,k,h),\\
 \mathsf{L}_2(n)&=\left\lfloor
 (u\mathbin{\dot{-}}Q)/(v\mathbin{\dot{-}}Q)
 \right\rfloor.
\end{align*}
The costs are $7+3+22+3=35$. The cited theorem establishes
that both root calls satisfy their domain conditions, that the final
divisor is positive, and that the result is $\lfloor\log_2n\rfloor$.
A $38$-operation extension to every $n\geq1$ is
\[
 z=\left\lfloor1/n\right\rfloor,\qquad
 \mathsf{L}(n)=\mathsf{L}_2(n+z)\mathbin{\dot{-}}z.
\]
Indeed $z=1$ only at $n=1$; hence the argument of
$\mathsf{L}_2$ is always at least $2$, and the output at $n=1$ is $0$.

\subsection{An arithmetic term for bitwise AND}
\label{sec:and-term}

The binary bitwise conjunction $a \, \& \, b$ is a convenient notation for the clearing step in
\eqref{eq:parameters_integer} and in the packed sieve, but it is not an essential primitive. Since bitwise AND is Kalm\'ar elementary, Mazzanti's theorem~\cite{Mazzanti} guarantees that some arithmetic term represents it. Here we construct an explicit and comparatively short one. We use $x\bmod y$ for the least nonnegative remainder of $x$ upon
division by the positive integer $y$. We observe that $x \bmod y = x - y \cdot \lfloor x / y \rfloor$ is an arithmetic term.

\begin{theorem}[An arithmetic term for binary bitwise conjunction]
\label{thm:and-term}
For integers $a,b\geq 0$, define
\begin{equation}
\label{eq:and-parameters}
r=1+\floor{\log_2(a+b+1)},\qquad
\ell=2^r-1,\qquad
m=2^a,
\end{equation}
\begin{equation}
\label{eq:and-XY}
X=2^{\ell m+a+2},\qquad
Y=(X+1)^\ell+1,
\end{equation}
and
\begin{equation}
\label{eq:and-J}
J=
\left(
\floor{\frac{(1+XY)^a(1+Y)^{\ell-a}}{Y^b}}
\bmod Y
\right).
\end{equation}
Then
\begin{equation}
\label{eq:and-term}
\AND(a,b)=
\nu_2\!\left(
\bigl(J^m\bmod(X^m-2)\bigr)\bmod(2m)
\right)
\end{equation}
satisfies
\begin{equation}
\label{eq:and-identity}
\AND(a,b)=a\bitand b.
\end{equation}
\end{theorem}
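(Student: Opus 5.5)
The plan is to decode the construction in three layers: first identify $J$ as an integer polynomial in $X$, then show that its $m$-th power reduced modulo $X^m-2$ is $2^d$ plus a multiple of $2m$, where $d=a\bitand b$, and finally read off $d$ as a $2$-adic valuation.

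\textbf{Step 1: $J$ as a digit extraction.} Expand in powers of $Y$:
\[
(1+XY)^a(1+Y)^{\ell-a}=\sum_k Y^k\sum_i\binom ai\binom{\ell-a}{k-i}X^i .
\]
This needs $\ell\geq a$; in fact $\ell=2^r-1\geq a+b\geq b$, since $2^r>a+b+1$ by the definition of $r$.

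Each base-$Y$ coefficient is bounded by $\sum_i\binom ai\binom{\ell-a}{k-i}X^i\leq(1+X)^\ell<Y$. Hence the floor-and-remainder in \eqref{eq:and-J} extracts the $Y^b$ digit:
\[
J=\sum_{i}c_iX^i,\qquad c_i=\binom ai\binom{\ell-a}{b-i}.
\]
Here $0\leq c_i\leq 2^\ell$ and the degree is at most $a$.

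\textbf{Step 2: parity of the coefficients.} Since $\ell$ has all $r$ bits set and $a<2^r$, the binary expansion of $\ell-a$ is the $r$-bit complement of $a$.

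By Lucas' theorem (equivalently Kummer, as in \cref{lem:nth-carries}), $c_i$ is odd exactly when $i\subseteq a$ and $b-i\subseteq\overline a$ bitwise. In that case $i$ and $b-i$ have disjoint supports, so $i+(b-i)=b$ involves no carries. This forces $i=a\bitand b=d$, and conversely $i=d$ satisfies both conditions.

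Therefore $J=X^d+2K$ for some polynomial $K$ with nonnegative integer coefficients, where $c_d$ is odd and absorbed into this form.

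\textbf{Step 3: the $m$-th power modulo $2m$.} With $m=2^a$, expand
\[
J^m=\sum_{j=0}^{m}\binom mj X^{d(m-j)}(2K)^j .
\]
For $j\geq1$, Kummer gives $\nu_2\bigl(\binom mj2^j\bigr)\geq a-\nu_2(j)+j\geq a+1$. So every coefficient of $J^m-X^{dm}$, viewed as a polynomial in $X$, is divisible by $2m=2^{a+1}$.

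Reducing a polynomial modulo $X^m-2$ replaces $X^{qm+s}$ by $2^qX^s$. This only multiplies coefficients by powers of $2$, so it preserves divisibility by $2^{a+1}$. Also $X^{dm}\mapsto 2^d$. Hence the reduced polynomial has the form
\[
2^d+\sum_{s<m}e_sX^s,\qquad e_s\equiv0\pmod{2^{a+1}} .
\]

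\textbf{Step 4: the main obstacle, identifying the reduced form with the true remainder.} The delicate point is that this reduced polynomial must equal the actual integer $J^m\bmod(X^m-2)$. That requires its value to lie in $[0,X^m-2)$, which I expect to be the main bookkeeping.

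I would bound the reduced coefficients directly. Before reduction, every coefficient of $J^m$ is at most $J(1)^m\leq\binom{\ell}{b}^m\leq2^{\ell m}$. Reduction multiplies by at most $2^q$ with $q\leq a$. Moreover, each coefficient $e_s$ (and the constant $2^d+e_0$) collects contributions from the exponents $s,s+m,\dots$, of which there are at most $a+1$.

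So each reduced coefficient is below $(a+1)2^{a+\ell m}<2^{\ell m+a+2}=X$ (using $a+1\leq2^a$), hence below $X-1$. This makes the value less than $X^m-1$. Combined with nonnegativity and $J^m\geq 1$, this should give the exact remainder, with a little care at the endpoint $X^m-2$.

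\textbf{Step 5: conclusion.} Since $X$ is a power of $2$ with exponent at least $a+2$, every term $e_sX^s$ with $s\geq1$ vanishes modulo $2m$. The constant term satisfies $2^d+e_0\equiv2^d\pmod{2^{a+1}}$, and $2^d<2m$ because $d\leq a$.

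Hence $\bigl(J^m\bmod(X^m-2)\bigr)\bmod(2m)=2^d$, and taking $\nu_2$ gives $d=a\bitand b$. The edge cases $a=0$ or $b=0$ go through unchanged, giving $m=1$ and $d=0$.
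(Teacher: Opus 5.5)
You follow the paper's route. You extract $J=F_b(X)$ as a base-$Y$ digit and show $J\equiv X^d$ coefficientwise modulo $2$; you use Lucas on $c_i=\binom ai\binom{\ell-a}{b-i}$ where the paper uses the Frobenius factorization of $(1+ZT)^a(1+T)^{\ell-a}$, and the two are equivalent. You then lift to $J^m\equiv X^{dm}$ modulo $2m$, reduce modulo $Z^m-2$ in a formal variable, and bound the size. Steps 1--3 and 5 are fine.

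The gap is in Step 4, which you rightly identify as the crux. The inequality $(a+1)2^{a+\ell m}<2^{\ell m+a+2}=X$ is equivalent to $a+1<4$. It is therefore false for every $a\geq3$: equality at $a=3$, and reversed beyond. Your justification ``$a+1\leq2^a$'' only gives $(a+1)2^{a+\ell m}\leq2^{\ell m+2a}$, which lies below $X$ only when $a\leq1$. So the bound on the reduced coefficients is not established. Without it, the identification of $J^m\bmod(X^m-2)$ with the evaluated polynomial remainder also fails. The strict inequality at the endpoint $X^m-2$, which is exactly what that identification needs, is left open as ``a little care.''

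The loss comes from bounding each of the up to $a+1$ coefficients of $J^m$ that feed a single reduced coefficient separately by the whole sum $J(1)^m$. These are distinct nonnegative coefficients of $J^m$, so their total is at most $J(1)^m=\binom{\ell}{b}^m\leq2^{\ell m}$. Reduction multiplies each coefficient by at most $2^a$. Hence the sum of \emph{all} coefficients of the remainder $R$ is at most $2^{\ell m+a}=X/4$. Since $\deg R<m$, this gives $0\leq R(X)\leq (X/4)X^{m-1}=X^m/4<X^m-2$, which settles the endpoint uniformly, including $m=1$. This is precisely the paper's estimate, and with it your argument is complete.
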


\begin{proof}
By construction $\ell=2^r-1\geq a+b$, so both $a$ and $b$ fit in $r$ binary
digits. Here we observe that $r = a + b + 1$ would also work, with less arithmetic operations but more digits to handle. Now consider
\[
F(Z,T)=(1+ZT)^a(1+T)^{\ell-a}
      =\sum_{j=0}^{\ell}F_j(Z)T^j.
\]
All coefficients of the $F_j$ are nonnegative, and
\[
F_j(X)\leq F(X,1)=(1+X)^a2^{\ell-a}
\leq(1+X)^\ell<Y.
\]
Thus the numbers $F_j(X)$ are valid base-$Y$ digits. Dividing $F(X,Y)$ by
$Y^b$, taking the floor, and reducing modulo $Y$ extracts the $b$-th digit:
\begin{equation}
\label{eq:J-coefficient}
J=F_b(X).
\end{equation}

Write
\[
a=\sum_{j=0}^{r-1}a_j2^j,
\qquad
b=\sum_{j=0}^{r-1}b_j2^j,
\qquad a_j,b_j\in\{0,1\}.
\]
Because $\ell$ has $r$ binary digits all equal to $1$, the integer $\ell-a$
is the $r$-bit complement of $a$. Over $\mathbb{F}_2[Z,T]$, Frobenius gives
\[
F(Z,T)\equiv
\prod_{a_j=1}\left(1+Z^{2^j}T^{2^j}\right)
\prod_{a_j=0}\left(1+T^{2^j}\right)
\pmod2.
\]
To obtain the coefficient of $T^b$, each factor is forced by the bit $b_j$.
Hence
\begin{equation}
\label{eq:Fb-parity}
F_b(Z)\equiv Z^d\pmod2,
\qquad
d=\sum_{j=0}^{r-1}a_jb_j2^j=a\bitand b.
\end{equation}
Since the coefficients of $F_b$ are nonnegative, there exists
$H\in\Z_{\geq 0}[Z]$ such that
\begin{equation}
\label{eq:Fb-G}
F_b(Z)=Z^d+2H(Z).
\end{equation}

Now $m=2^a$. For $1\leq s<m$,
\[
\nu_2\!\binom{m}{s}=a-\nu_2(s),
\]
because
\(
\binom{m}{s}=(m/s)\binom{m-1}{s-1}
\)
and every binomial coefficient in row $m-1=2^a-1$ is odd. Therefore
\[
\nu_2\!\left(2^s\binom{m}{s}\right)
=s+a-\nu_2(s)\geq a+1.
\]
The final term $(2H)^m$ is also divisible by $2^{a+1}=2m$, since
$m=2^a\geq a+1$. Expanding \eqref{eq:Fb-G} to the $m$-th power gives
\begin{equation}
\label{eq:Fbm}
F_b(Z)^m=Z^{dm}+2mK(Z)
\end{equation}
for some $K\in\Z_{\geq 0}[Z]$.

Reduce this polynomial modulo $Z^m-2$. Each monomial $Z^e$ becomes
\[
2^{\floor{e/m}}Z^{\,e\bmod m}.
\]
Hence the polynomial remainder has the form
\begin{equation}
\label{eq:poly-rem}
R(Z)=2^d+2mK_0(Z),\qquad \deg R<m,
\end{equation}
with nonnegative coefficients. We next verify that evaluating this
polynomial remainder at $X$ also gives the ordinary integer remainder in
\eqref{eq:and-term}. By Vandermonde's identity,
\[
F_b(1)=\binom{\ell}{b}\leq 2^\ell,
\]
so the sum of the coefficients of $F_b(Z)^m$ is at most $2^{\ell m}$. Since
$\deg F_b^m\leq am$, reduction modulo $Z^m-2$ multiplies the coefficient of
any monomial by at most $2^a$. Thus the sum of the coefficients of $R$ is
at most
\[
2^{\ell m+a}=\frac X4.
\]
Because $\deg R<m$ and $X\geq 8$,
\[
0\leq R(X)\leq \frac X4X^{m-1}
=\frac{X^m}{4}<X^m-2.
\]
It follows that
\[
J^m\bmod(X^m-2)=R(X).
\]
Finally, \eqref{eq:poly-rem} and $0\leq d\leq a$ give
\[
\bigl(J^m\bmod(X^m-2)\bigr)\bmod(2m)
=R(X)\bmod(2m)=2^d,
\]
since $2^d<2^{a+1}=2m$. Taking the $2$-adic valuation yields
$\AND(a,b)=d=a\bitand b$.
\end{proof}

\subsection{An arithmetic term for integer roots}
\label{sec:integer-roots}

The following theorem proves the conjectural integer-root formula introduced
in \cite{ShuniaKronecker}.

\begin{theorem}[Integer-root term]
\label{thm:integer-root}
Let $n,m$ be positive integers satisfying
\[
n>2,
\qquad
2\leq m\leq\floor{\log_2 n}+1,
\]
and suppose that $n$ is not a perfect $m$-th power.  Put
\[
s=2nm,
\qquad
b=n^s,
\qquad
D=b^m-n.
\]
Then
\begin{equation}
\label{eq:integer-root-term}
\floor{\sqrt[m]{n}}
=
\floor{
\frac{(b+1)^{s+1}\bmod D}{(b+1)^s\bmod D}
}-1.
\end{equation}
Consequently the integer-root function on this domain is represented by an
arithmetic term.
\end{theorem}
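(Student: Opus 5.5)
The plan is to read the right-hand side of \eqref{eq:integer-root-term} as a Kronecker-type substitution in the ring $\Z[\theta]$, where $\theta=\sqrt[m]{n}$. Modulo $D=b^m-n$ we have $b^m\equiv n$, so $b$ plays the role of $\theta$. Expanding $(b+1)^s=\sum_j\binom sj b^j$ and replacing each $b^j$ by $n^{\floor{j/m}}b^{\,j\bmod m}$ gives
\[
(b+1)^s\equiv\sum_{i=0}^{m-1}c_i(s)\,b^i\pmod D .
\]
The coefficients $c_i(s)=\sum_{j\equiv i\ (m)}\binom sj n^{(j-i)/m}$ are nonnegative integers. They are exactly the coordinates of $(1+\theta)^s$ in the basis $1,\theta,\ldots,\theta^{m-1}$, because $n$ is not an $m$-th power and so $x^m-n$ is irreducible by Capelli. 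The same holds with $s+1$ in place of $s$.

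\textbf{Step 1: the expansions are the true remainders.} Each $c_i(s)$ is bounded by the value at $\theta$ of the whole expansion, so $c_i(s)\le(1+\theta)^s$. Since $m\ge2$ and $n>2$, one has $1+\theta<n$ comfortably, and hence $(1+\theta)^{s+1}$ is tiny compared with $b=n^s$. It then follows that $\sum_i c_i(s)b^i<b^m-n=D$, and likewise for $s+1$. So $Q=\sum_i c_i(s)b^i$ and $P=\sum_i c_i(s+1)b^i$ exactly, with $Q>0$ because $c_{m-1}(s)>0$. Here $s=2nm\ge m$ is used.

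\textbf{Step 2: reduce $P/Q$ to a ratio of top coefficients.} Since $b$ vastly exceeds every coefficient, we get
\[
\frac PQ=\frac{c_{m-1}(s+1)}{c_{m-1}(s)}+O\!\left(\frac{(1+\theta)^{2s+1}}{b}\right),
\]
and this error term is negligible. Multiplication by $1+\theta$ gives the recursion $c_{m-1}(s+1)=c_{m-1}(s)+c_{m-2}(s)$. The ratio is therefore $1+c_{m-2}(s)/c_{m-1}(s)$, and it remains to show that $\floor{c_{m-2}(s)/c_{m-1}(s)}=\floor\theta$, with enough room left to absorb the Step 2 error.

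\textbf{Step 3, the main obstacle: an explicit convergence rate.} Use the conjugate embeddings $\theta\mapsto\omega^k\theta$, with $\omega=e^{2\pi i/m}$. They give
\[
c_i(s)=\frac1m\sum_{k=0}^{m-1}(\omega^k\theta)^{-i}(1+\omega^k\theta)^s .
\]
Thus $c_{m-2}/c_{m-1}=\theta\,(1+\epsilon_1)/(1+\epsilon_2)$, where $|\epsilon_{1,2}|\le(m-1)\theta\,\lambda^s$ and $\lambda=\max_{k\ne0}|1+\omega^k\theta|/(1+\theta)$. For the rate, $1-\lambda^2=2\theta(1-\cos(2\pi/m))/(1+\theta)^2\gtrsim\theta/(m^2(1+\theta)^2)$. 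With $s=2nm$ and $\theta=n^{1/m}\le n$, this should give $\lambda^s\le\exp(-c\,n/(m\theta))$, which is super-polynomially small in $n$ because $m\le\log_2n+1$.

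\textbf{Step 4: compare with the gap to the nearest integer.} Since $n$ is not a perfect $m$-th power, for $k=\floor\theta$ the integer $n-k^m$ lies in $[1,\,n-1]$ and $(k+1)^m-n\ge1$. The mean value theorem then gives $\theta-k\ge1/(m\theta^{m-1})\ge1/(mn)$ and $k+1-\theta\ge1/(m(k+1)^{m-1})\ge1/(4mn)$. It remains to check that the combined error from Steps 2 and 3 is below $1/(4mn)$. This comparison is where I expect the delicate constants, especially for small $n$ and for $m$ near $\log_2n+1$, where $\theta$ is close to $2$. I would first try the crude bound $\lambda^s\le e^{-s(1-\lambda^2)/2}$, check the worst case $\theta\approx2$ separately, and verify any small residual cases of $n$ by hand. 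Once this holds, $\floor{P/Q}-1=\floor\theta$. The arithmetic-term claim is then immediate, since the formula uses only $+$, $\dot-$, multiplication, exponentiation, $\bmod$ and floor division.
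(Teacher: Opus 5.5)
Your route is the paper's route. You reduce $(x+1)^t$ modulo $x^m-n$, read the coefficients as base-$b$ digits of $(b+1)^t\bmod D$, and use $c_{m-1}(s+1)=c_{m-1}(s)+c_{m-2}(s)$. You then control $c_{m-2}(s)/c_{m-1}(s)$ by the root-of-unity filter and compare with the distance from $\theta$ to the nearest integers.

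\textbf{The gap is in Step 2.} Your error term $O\bigl((1+\theta)^{2s+1}/b\bigr)$ comes from using only $c_{m-1}(s)\geq1$ in the denominator, and it is not negligible. For $m=2$,
$(1+\theta)^{2s+1}/b=(1+\sqrt n)\bigl((1+\sqrt n)^2/n\bigr)^{4n}$.
Since $(1+\sqrt n)^2>n$, this exceeds $1$ for every $n$; it is in fact roughly $e^{8\sqrt n}$. So Step 2 as written does not reduce $P/Q$ to the ratio of top coefficients. The repair is to do Step 3 first. The filter gives $c_j(t)=\frac{(1+\theta)^t}{m\theta^j}(1+\epsilon_j)$ with $|\epsilon_j|\le(m-1)\lambda^t$, hence $c_j(t)/c_{m-1}(t)<2n$ for $j<m-1$ and $t\in\{s,s+1\}$. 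Then $Q=c_{m-1}(s)b^{m-1}(1+\delta_s)$ and $P=c_{m-1}(s+1)b^{m-1}(1+\delta_{s+1})$ with $0\le\delta_t<4n/b$. The ratio is therefore perturbed only by $O(n^2/b)$, which is genuinely negligible. This relative estimate on the lower digits is exactly how the paper handles them.

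\textbf{Smaller points.}
\begin{itemize}
\item[(i)] Capelli is misquoted. That $n$ is not an $m$-th power does not make $x^m-n$ irreducible: $n=9$, $m=4$ lies in the domain and $x^4-9=(x^2-3)(x^2+3)$. You do not need irreducibility, since the filter identity only uses that the $m$ numbers $\omega^k\theta$ are distinct roots of $x^m-n$.
\item[(ii)] In Step 1, $(1+\theta)^{s+1}$ is not tiny compared with $b$. At $n=3$, $m=2$ the ratio $(1+\sqrt3)^{13}/3^{12}$ is about $0.89$. The needed inequality $(1+\theta)^{s+1}<b$ does hold, but it requires an actual estimate; the paper proves $(1+\sqrt n)^{4n+1}<n^{4n}$ for $n\geq3$.
\item[(iii)] In Step 4, $(k+1)^{m-1}\le4n$ is false in general. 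For $m=10$, $n=1025$ one has $k=2$ and $3^9=19683>4100$. The paper instead uses $(k+1)^{m-1}\le(2\theta)^{m-1}\le n^2/\theta$, which gives a gap of at least $\theta/(mn^2)$, and it sizes its spectral error ($E\le 1/(8mn^2)$) to beat that. Your bound $k+1-\theta>1/(4mn)$ is nonetheless true via a short bootstrap. If $k+1-\theta\le1/(4mn)$, then $(k+1)^{m-1}<e^{1/(4n)}\theta^{m-1}<2\theta^{m-1}$, so $(k+1)^m-n<2m\theta^{m-1}/(4mn)=1/(2\theta)<1$, contradicting $(k+1)^m-n\geq1$.
\end{itemize}

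With these repairs, what remains is the explicit constant-checking you anticipated. The paper carries it out separately for $m=2$ and $m\geq3$.
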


\begin{proof}
Let
\[
\rho=n^{1/m}
\]
and, in the quotient ring $\Z[x]/(x^m-n)$, define
\[
F_t(x)=(x+1)^t\bmod(x^m-n)
=
\sum_{j=0}^{m-1}c_j(t)x^j.
\]
Reduction of the binomial expansion gives
\begin{equation}
\label{eq:root-coefficients-app}
c_j(t)
=
\sum_{\substack{\ell\geq0\\j+\ell m\leq t}}
\binom{t}{j+\ell m}n^\ell.
\end{equation}
All coefficients are nonnegative.  Put
\[
S_t=\sum_{j=0}^{m-1}c_j(t).
\]
Since $n=\rho^m$ and $\rho\geq1$,
\[
S_t
\leq
\sum_{a=0}^{t}\binom ta\rho^a
=(1+\rho)^t.
\]
For $t\leq s+1$, the inequalities $\rho\leq\sqrt n$ and $s\geq4n$ give
\[
S_t\leq(1+\sqrt n)^{s+1}<n^s=b.
\]
Indeed, $(1+\sqrt n)/n<1$ for $n\geq3$, so it is enough to prove
\[
(1+\sqrt n)^{4n+1}<n^{4n}.
\]
Put $x=\sqrt n$ and
\[
g(x)=8x^2\log x-(4x^2+1)\log(1+x).
\]
Using $\log(1+1/x)\leq1/x$,
\[
\begin{aligned}
g'(x)
&=8x\log x-8x\log(1+1/x)+8x-\frac{4x^2+1}{x+1}\\
&\geq4x-4-\frac5{x+1}>0
\qquad(x\geq\sqrt3).
\end{aligned}
\]
Moreover, $\sqrt3<7/4$ and
$11^{13}<3^{12}4^{13}$ imply $g(\sqrt3)>0$.  Hence $g(x)>0$ for
$x\geq\sqrt3$, proving the required estimate.

Thus every $c_j(t)<b$, and more precisely
\[
0<F_t(b)
\leq S_tb^{m-1}
\leq(b-1)b^{m-1}
<b^m-n=D,
\]
where the last inequality follows from $b^{m-1}>n$.  Since
$b^m\equiv n\pmod D$,
\begin{equation}
\label{eq:root-encoding-app}
(b+1)^t\bmod D=F_t(b)
\qquad(t=s,s+1).
\end{equation}

It remains to estimate the quotient in \eqref{eq:integer-root-term}.  Put
\[
P_j(t)=\frac{\rho^jc_j(t)}{(1+\rho)^t}.
\]
Let $\zeta=e^{2\pi i/m}$.  The root-of-unity filter gives
\[
P_j(t)
=
\frac1m\sum_{\ell=0}^{m-1}
\zeta^{-j\ell}
\left(
\frac{1+\rho\zeta^\ell}{1+\rho}
\right)^t.
\]
Writing
\[
\eta^2
=
1-
\frac{2\rho(1-\cos(2\pi/m))}{(1+\rho)^2},
\]
the nontrivial terms have modulus at most $\eta^t$.  We claim
\begin{equation}
\label{eq:root-spectral-app}
E:=(m-1)\eta^s\leq\frac{1}{8mn^2}.
\end{equation}
For $m=2$,
\[
\eta^{4n}
=
\left(\frac{\sqrt n-1}{\sqrt n+1}\right)^{4n}
\leq
\exp\left(-\frac{8n}{\sqrt n+1}\right)
\leq
\frac{1}{16n^2}.
\]
The first inequality follows from
\[
\log\left(\frac{\sqrt n-1}{\sqrt n+1}\right)
=
\log\left(1-\frac2{\sqrt n+1}\right)
\leq-\frac2{\sqrt n+1}.
\]
For the second, put $x=\sqrt n$.  The function
\[
\frac{8x^2}{x+1}-\log(16x^4)
\]
is positive at $x=\sqrt3$ and has derivative
\[
\frac{8x(x+2)}{(x+1)^2}-\frac4x>0
\qquad(x\geq\sqrt3).
\]

For $m\geq3$, using
\[
1-\cos(2\pi/m)\geq\frac{27}{2m^2},
\qquad
(1+\rho)^2\leq3\rho^2,
\qquad
n\geq2^{m-1},
\]
we obtain
\[
\eta^s
\leq
\exp\left(-\frac{9n^{1-1/m}}m\right)
\leq
\frac{1}{8m(m-1)n^2}.
\]
To justify the cosine bound, use
$1-\cos(2\pi/m)=2\sin^2(\pi/m)$ and concavity of $\sin$ on
$[0,\pi/2]$:
\[
\frac{\sin(\pi/m)}{\pi/m}
\geq
\frac{\sin(\pi/3)}{\pi/3}
=
\frac{3\sqrt3}{2\pi}.
\]
Also $n\geq2^{m-1}$ implies
$\rho\geq2^{(m-1)/m}>3/2$, hence $(1+\rho)^2\leq3\rho^2$.  Finally,
$\log(1-u)\leq-u$ gives
\[
\eta^s
\leq
\exp\left(
-\frac{s\rho(1-\cos(2\pi/m))}{(1+\rho)^2}
\right).
\]
The remaining scalar inequality is
\[
\frac{9n^{1-1/m}}m
\geq
\log\bigl(8m(m-1)n^2\bigr).
\]
The difference between the two sides is increasing in $n$ for
$n\geq2^{m-1}$, because its derivative has the sign of
\[
9(m-1)n^{1-1/m}-2m^2>0.
\]
At the endpoint it is positive for $m=3,4$ by direct substitution.  For
$m\geq5$, the left side is greater than $9\cdot2^{m-2}/m$, while the right
side is less than $2.6m$; the inequality follows from
$2^{m-2}>(2.6/9)m^2$, first at $m=5$ and then by induction.  This proves
\eqref{eq:root-spectral-app}.

Hence
\begin{equation}
\label{eq:root-uniformity-app}
\left|P_j(t)-\frac1m\right|
\leq
\frac Em
\qquad(t=s,s+1).
\end{equation}
Let
\[
C=c_{m-1}(s),
\qquad
r=\frac{c_{m-2}(s)}{c_{m-1}(s)}.
\]
From \eqref{eq:root-uniformity-app},
\begin{equation}
\label{eq:root-leading-ratio-app}
r
=
\rho\frac{P_{m-2}(s)}{P_{m-1}(s)},
\qquad
|r-\rho|
\leq
\frac{2\rho E}{1-E}
<
\frac{\rho}{3mn^2}.
\end{equation}
Multiplication by $x+1$ in the quotient ring shows that the leading
coefficient of $F_{s+1}$ is $C(1+r)$.

The lower base-$b$ digits cause a smaller error.  Equation
\eqref{eq:root-uniformity-app} implies, for $0\leq j<m-1$ and
$t\in\{s,s+1\}$,
\[
\frac{c_j(t)}{c_{m-1}(t)}
=
\rho^{m-1-j}\frac{P_j(t)}{P_{m-1}(t)}
<2n.
\]
Consequently,
\[
\delta_t
:=
\sum_{j=0}^{m-2}
\frac{c_j(t)}{c_{m-1}(t)b^{m-1-j}}
<
2n\sum_{r=1}^{m-1}b^{-r}
<
\frac{2n}{b-1}
\leq
\frac{4n}{b}.
\]
Thus
\[
F_s(b)=Cb^{m-1}(1+\delta_s),
\]
\[
F_{s+1}(b)=C(1+r)b^{m-1}(1+\delta_{s+1}),
\]
where
\[
0\leq\delta_s,\delta_{s+1}<\frac{4n}{b}.
\]
Moreover, \eqref{eq:root-leading-ratio-app} gives $1+r<3n$.  Therefore
\[
\begin{aligned}
\left|
\frac{F_{s+1}(b)}{F_s(b)}-(1+\rho)
\right|
&\leq
|r-\rho|
+(1+r)\frac{|\delta_{s+1}-\delta_s|}{1+\delta_s}\\
&<
\frac{\rho}{3mn^2}+\frac{24n^2}{b}.
\end{aligned}
\]
Since $s\geq12$, $b=n^s\geq n^{12}$, and $m\leq n$,
\[
\frac{24n^2}{b}<\frac{\rho}{3mn^2}.
\]
It follows that
\begin{equation}
\label{eq:root-ratio-error-app}
\left|
\frac{F_{s+1}(b)}{F_s(b)}-(1+\rho)
\right|
<
\frac{2\rho}{3mn^2}.
\end{equation}

Let $q=\floor\rho$.  Since $n$ is not a perfect $m$-th power,
\[
1\leq n-q^m
=(\rho-q)\sum_{j=0}^{m-1}\rho^{m-1-j}q^j
\leq m\rho^{m-1}(\rho-q),
\]
so $\rho-q\geq\rho/(mn)$.  Similarly,
\[
1\leq(q+1)^m-n
=(q+1-\rho)\sum_{j=0}^{m-1}(q+1)^{m-1-j}\rho^j.
\]
Since $q+1\leq2\rho$ and $2^{m-1}\leq n$, the final sum is at most
$m2^{m-1}\rho^{m-1}\leq mn^2/\rho$.  Therefore
\begin{equation}
\label{eq:root-gap-app}
\min(\rho-q,q+1-\rho)
\geq
\frac{\rho}{mn^2}.
\end{equation}
Combining \eqref{eq:root-encoding-app}, \eqref{eq:root-ratio-error-app}, and
\eqref{eq:root-gap-app} gives
\[
q
<
\frac{(b+1)^{s+1}\bmod D}{(b+1)^s\bmod D}-1
<
q+1.
\]
Hence
\[
\floor{
\frac{(b+1)^{s+1}\bmod D}{(b+1)^s\bmod D}
}
=q+1,
\]
and \eqref{eq:integer-root-term} follows.
\end{proof}

\subsection{A root-derived arithmetic term for the binary logarithm}
\label{sec:binary-log-root}

The elementary limit
\[
\log n=\lim_{k\to\infty}k(n^{1/k}-1)
\]
suggests comparing the same expression for $n$ and for $2$.  A sufficiently
large but finite root index already determines the integer part of the
binary logarithm.  The integer-root term above then removes the real roots.
The resulting term offers an alternative to the cited representation of
$\floor{\log_2}$. It covers $L=\floor{\log_2T_k}-1$ in the recurrence,
since $T_k\geq8$, and the parameter $r$ in \eqref{eq:and-parameters}
whenever $a+b\geq1$.

\begin{theorem}[Binary logarithm from integer roots]
\label{thm:binary-log-root}
Let $n\geq2$ and put
\[
k=3n^2,
\qquad
Q=16k^2(n+1).
\]
Define the two integer roots
\[
u=\floor{\sqrt[k]{nQ^k}},
\qquad
v=\floor{\sqrt[k]{2Q^k}}.
\]
Then
\begin{equation}
\label{eq:binary-log-root}
\floor{\log_2 n}
=
\floor{\frac{u-Q}{v-Q}}.
\end{equation}
Moreover, both $u$ and $v$ satisfy the hypotheses of
\cref{thm:integer-root}; substituting \eqref{eq:integer-root-term} for these
two roots therefore gives a fixed arithmetic term for
$\floor{\log_2 n}$.
\end{theorem}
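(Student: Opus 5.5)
Write $L=\floor{\log_2 n}$, $\rho_n=n^{1/k}$ and $\rho_2=2^{1/k}$. The plan has three steps: check that Theorem~\ref{thm:integer-root} applies to both roots, approximate $u-Q$ and $v-Q$ by $Qk^{-1}\log n$ and $Qk^{-1}\log 2$, and then show that the floor of their ratio is $L$.

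\textbf{Hypotheses of Theorem~\ref{thm:integer-root}.} For $N=nQ^k$ or $N=2Q^k$ we need $N>2$, $2\le k\le\floor{\log_2 N}+1$, and that $N$ is not a perfect $k$-th power.
\begin{itemize}
\item The first two conditions are immediate, since $\log_2 N\ge k\log_2 Q\ge k$.
\item For the third, $N=cQ^k$ with $c\in\{n,2\}$ is a $k$-th power iff $c$ is one. This is impossible because $2\le c\le n<2^k$, so $1<c^{1/k}<2$.
\end{itemize}

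\textbf{Approximating the roots.} By definition $u=\floor{Q\rho_n}$ and $v=\floor{Q\rho_2}$, so
\[
u-Q=Q(\rho_n-1)-\theta_1,\qquad v-Q=Q(\rho_2-1)-\theta_2,\qquad \theta_i\in[0,1).
\]
Write $x=\log n/k$. Since $0<x\le\log n/(3n^2)<1$, we have $x\le e^x-1\le x+x^2$. Hence
\[
Q(\rho_n-1)=\frac{Q\log n}{k}\,(1+\epsilon_n),\qquad 0\le\epsilon_n\le\frac{\log n}{k}\le\frac{1}{3n},
\]
and similarly for $\rho_2$ with $\epsilon_2\le\log 2/k$. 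With $Q=16k^2(n+1)$, the main term $Q\log 2/k=16k(n+1)\log 2$ is enormous. In particular $v-Q>0$, so the final divisor is positive.

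\textbf{The ratio, and the main obstacle.} The ratio is
\[
\frac{u-Q}{v-Q}=\log_2 n\cdot\frac{(1+\epsilon_n)-\theta_1k/(Q\log n)}{(1+\epsilon_2)-\theta_2k/(Q\log 2)}.
\]
This is $\log_2 n$ times $1+\eta$ with $|\eta|\lesssim\log n/k+k/(Q\log 2)=O(\log n/n^2)$. The absolute error is therefore $O(\log^2 n/n^2)$.

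The crux is that this error does not decide the floor on its own. When $n=2^L$ exactly, $\log_2 n$ is an integer and we must rule out dipping below it. Otherwise we need the fractional gap
\[
L+1-\log_2 n\;\ge\;\log_2\frac{2^{L+1}}{2^{L+1}-1}\;\gtrsim\;\frac{1}{2n\log 2}
\]
to dominate the error. That works since $\log^2 n/n^2\ll 1/n$.

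For the upper side, $(u-Q)/(v-Q)<L+1$, I will use the crude error bound above against the $1/(2n\log 2)$ gap, tracking constants; $Q$ was chosen large enough to make this work.

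For the lower side, $(u-Q)/(v-Q)\ge L$, I plan an exact rather than asymptotic argument.
\begin{itemize}
\item Since $\log_2 n\ge L$, we have $\rho_n\ge\rho_2^L$, hence $\rho_n-1\ge L(\rho_2-1)+\binom L2(\rho_2-1)^2$ by the binomial expansion.
\item Multiplying by $Q$, the excess $Q\binom L2(\rho_2-1)^2\approx\binom L2\,Q\log^2 2/k^2$ is large, about $16(n+1)\log^22\binom L2$. It absorbs the floor losses, because $u-Q\ge Q(\rho_n-1)-1$ and $L(v-Q)\le LQ(\rho_2-1)$.
\item This settles $L\ge2$. For $L=1$ the excess vanishes. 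There I argue directly: if $n\ge3$ the gap $\rho_n-\rho_2$ is $\gtrsim\log(3/2)/k$, and $Q$ times this exceeds $1$. If $n=2$, then $u=v$ and the ratio is exactly $1$.
\end{itemize}

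Verifying these constant-level inequalities cleanly is where I expect most of the work.
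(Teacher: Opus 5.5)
Your proposal is correct, but it is organized differently from the paper's proof. The paper works throughout with the real ratio $R=(\alpha-1)/(\beta-1)$, where $\alpha=n^{1/k}$ and $\beta=2^{1/k}$. Its steps are:
\begin{itemize}
\item convexity of $t\mapsto\beta^t$ gives $R\ge\log_2 n$;
\item the estimate $R-\log_2 n<2/(3n\log 2)$ keeps $R$ below $a+1$;
\item a case split gives the lower separation. If $n$ is not a power of two, it uses $\log_2 n-a\ge\log_2(1+1/n)$. If $n=2^a$ with $a\ge2$, it uses $R=1+\beta+\cdots+\beta^{a-1}$. Together these show that for $n\ge3$, $R$ is at distance at least $\delta=\log 2/k$ from both $a$ and $a+1$;
\item one perturbation bound, $|\widetilde R-R|<(R+1)/(Q(\widetilde\beta-1))\le 1/(8k\log 2)<\delta$, transfers the floor, and $n=2$ is handled by $u=v$.
\end{itemize}

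Your upper side is essentially the paper's estimate. You use the slightly weaker gap $1/(2n\log 2)$ instead of $1/((n+1)\log 2)$.

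Your lower side differs: it bypasses $R$ and the separation from $a$ altogether. You compare the exact inequality $\rho_n-1\ge L(\rho_2-1)+\binom{L}{2}(\rho_2-1)^2$ directly with the floored integers. This gives $u-Q-L(v-Q)>Q\binom{L}{2}(\rho_2-1)^2-1\ge 16(n+1)\log^2 2-1>0$ for $L\ge2$. The argument is uniform in whether $n$ is a power of two, so the paper's case split disappears.

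For $L=1$ you do not need the estimate on $\rho_3-\rho_2$. Monotonicity of the floor already gives $u\ge v$, hence a ratio at least $1$.

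The constant tracking you postponed on the upper side closes for every $n\ge2$, with no small cases. The main error term is $\log^2 n/(3n^2\log 2)=\frac{2\log^2 n}{3n}\cdot\frac{1}{2n\log 2}$. Since $\log^2 n/n\le 4/e^2$, this is at most $\frac{8}{3e^2}\cdot\frac{1}{2n\log 2}$. The floor-loss contribution, governed by $k/(Q\log 2)=1/(16k(n+1)\log 2)$, is negligible.

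In short, the paper's route gives one symmetric mechanism: a two-sided separation plus a single perturbation lemma. Yours gives an exact, case-free lower bound, at the price of treating the two sides with different arguments.
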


\begin{proof}
Write
\[
a=\floor{\log_2 n},
\qquad
x=\log_2 n,
\qquad
\alpha=n^{1/k},
\qquad
\beta=2^{1/k},
\]
and set
\[
R=\frac{\alpha-1}{\beta-1}.
\]
Since $\alpha=\beta^x$ and $t\mapsto\beta^t$ is convex,
\[
R\geq x.
\]
Let $y=(\log n)/k$ and $z=(\log2)/k$.  Using
$e^y-1\leq ye^y$ and $e^z-1\geq z$,
\[
0\leq R-x
\leq
x(e^y-1)
\leq
\frac{(\log n)^2e^y}{k\log2}
<
\frac{2}{3n\log2}.
\]
Here $(\log n)^2<n$ for $n\geq2$, and $y<\log2$.

Because $2^{a+1}$ and $n$ are distinct integers,
$2^{a+1}\geq n+1$, so
\[
a+1-x
=
\frac{\log(2^{a+1}/n)}{\log2}
\geq
\frac{\log(1+1/n)}{\log2}
\geq
\frac{1}{(n+1)\log2}.
\]
For $n\geq3$ it follows that
\[
a+1-R
>
\frac{n-2}{3n(n+1)\log2}
\geq
\frac{\log2}{3n^2}
=
\frac{\log2}{k}.
\]
The last inequality follows from $(\log2)^2<1/2$.

For the lower boundary, if $n$ is not a power of $2$, then
\[
x-a
\geq
\frac{\log(1+1/n)}{\log2}
>
\frac{\log2}{k}.
\]
If $n=2^a$ and $a\geq2$, then
\[
R=1+\beta+\cdots+\beta^{a-1},
\]
so
\[
R-a\geq\beta-1\geq\frac{\log2}{k}.
\]
Thus for every $n\geq3$, the number $R$ lies in the interval
$[a,a+1)$ and is at distance at least
\[
\delta=\frac{\log2}{k}
\]
from either endpoint.  For $n=2$, one has $R=1$ exactly.

Now put
\[
\widetilde\alpha=\frac uQ,
\qquad
\widetilde\beta=\frac vQ,
\qquad
\widetilde R
=
\frac{u-Q}{v-Q}.
\]
The definitions of $u$ and $v$ give
\[
0\leq\alpha-\widetilde\alpha<\frac1Q,
\qquad
0\leq\beta-\widetilde\beta<\frac1Q.
\]
Since $\beta-1\geq(\log2)/k$ and the chosen $Q$ satisfies
$Q(\beta-1)>2$, we have
\[
\widetilde\beta-1\geq\frac{\beta-1}{2}>0.
\]
For $n\geq3$, using $R<a+1$ and $a+2\leq n+1$,
\[
\begin{aligned}
|\widetilde R-R|
&<
\frac{R+1}{Q(\widetilde\beta-1)}\\
&\leq
\frac{2k(n+1)}{Q\log2}
=
\frac{1}{8k\log2}
<
\frac{\log2}{k}
=\delta,
\end{aligned}
\]
where the last inequality uses $\log2>1/2$.  Hence
$\floor{\widetilde R}=a$.  For $n=2$, the two radicands are equal, so
$u=v$ and $\widetilde R=1$.  This proves \eqref{eq:binary-log-root}.

It remains only to justify the arithmetic-term claim.  Since $Q\geq2$,
\[
k\leq\floor{\log_2(nQ^k)}+1,
\qquad
k\leq\floor{\log_2(2Q^k)}+1.
\]
Neither $nQ^k$ nor $2Q^k$ is a perfect $k$-th power.  Indeed, a rational
$k$-th root of the integer factor $n$ would have to be an integer, while
$k>\log_2 n$ rules out $n=d^k$ with $d\geq2$; the same argument applies to
$2$.  Thus \cref{thm:integer-root} applies to both $u$ and $v$, and
substitution into \eqref{eq:binary-log-root} gives the asserted arithmetic
term.
\end{proof}

\section{Straight-line programs and operation counts}
\label{app:programs}

The main text describes the integers produced by the sieve and proves
their invariants. This appendix gives explicit evaluation schedules for
the same constructions. Every count uses the six-operation convention
of Section~\ref{sec:arith-context} and includes the arithmetic subroutines
of Appendix~\ref{sec:slp-aux}. Previously computed registers remain
available for reuse.

\subsection{A straight-line program for one sieve update}
\label{sec:slp-update}

We use the operation-count convention of Section~\ref{sec:arith-context}.
Write $\mathsf V=19$, $\mathsf L=35$, $\mathsf A=34$, and $\mathsf H=26$
for the program lengths for valuation, binary logarithm on inputs at least
$2$, bitwise AND, and Hamming weight given in Appendix~\ref{sec:slp-aux}.
The symbol $a\mathbin{\mathrm{div}}b$ denotes $\floor{a/b}$.
The valuation program already computes $g=2^{\nu_2(t)}$ on input $t$.
We write $\mathsf{V}^*(t)$ for its pair of outputs $(\nu_2(t),g)$;
retaining this register keeps its cost at $19$.

One application of the autonomous update $\Phi$ is computed by the
following fixed program. Its input is a reachable state $T=T_k$; its
output is $r_{15}=T_{k+1}$. All registers remain available after the last
assignment.
\begin{equation}
\label{eq:slp-sieve-update}
\begin{array}{r@{\quad}l@{\qquad}r@{\quad}l}
(r_1,g_T)&\gets\mathsf{V}^*(T)&r_9&\gets2^{r_4}\\
r_2&\gets\lfloor\log_2T\rfloor&r_{10}&\gets r_9\dotminus1\\
r_3&\gets r_2\dotminus1&r_{11}&\gets r_{10}\mathbin{\mathrm{div}}r_6\\
r_4&\gets r_3r_1&r_{12}&\gets Tr_{11}\\
r_5&\gets2^{r_3}&r_{13}&\gets r_{10}\mathbin{\mathrm{div}}r_8\\
r_6&\gets r_5\dotminus1&r_{14}&\gets\AND(r_{12},r_{13})\\
r_7&\gets g_T&r_{15}&\gets r_{12}\dotminus r_{14}\\
r_8&\gets r_7\dotminus1&&
\end{array}
\end{equation}
The left column is executed before the right column. The correspondence
with \eqref{eq:parameters} is
$r_1=\nu_2(T)$, $r_3=L$, $r_{10}=M$, $r_{12}=U$, and $r_{13}=V$.
The common numerator $M$ is computed once and used in both quotients.
The assignment to $r_7$ is an alias of the retained valuation register.
Consequently, inlining the auxiliary programs gives length
\begin{equation}
\label{eq:slp-sieve-cost}
11+\mathsf V+\mathsf L+\mathsf A=99.
\end{equation}
The proof of \cref{thm:main} verifies that every denominator is positive,
that both geometric divisions are exact, and that the final truncated
subtraction is ordinary subtraction on every reachable input.

It is the \emph{single update} $\Phi$ that is a fixed straight-line
program. The unbounded sequence is obtained by iterating that program;
unrolling $k$ updates uses $k(11+\mathsf V+\mathsf L+\mathsf A)$
operations. This statement is distinct from the fixed straight-line
program taking $n$ directly to $p_n$ in \cref{sec:nth-term}.

\paragraph{Further outputs from retained registers.}
The current prime $p_{k+2}=r_1$ costs no additional operation. For
$k\geq1$, append
\[
s_1\gets T\mathbin{\mathrm{div}}r_7,
\qquad s_2\gets s_1\dotminus1,
\qquad s_3\gets\nu_2(s_2).
\]
Then $s_3=p_{k+3}-p_{k+2}$, by \cref{cor:state-gap}; the additional
length is $2+\mathsf V$. A uniform output, valid also at $k=0$, replaces
the final assignment by
\[
s_3\gets s_2+r_5,\qquad s_4\gets\nu_2(s_3),
\]
and has additional length $3+\mathsf V$. Alternatively, if only the
following prime is required and $k\geq1$, the two assignments
\[
s_1\gets T\dotminus r_7,\qquad s_2\gets\nu_2(s_1)
\]
return $p_{k+3}$ with additional length $1+\mathsf V$.

For $k\geq1$, the indicator of $p_{k+3}-p_{k+2}=2$ is obtained by
\[
s_1\gets T\mathbin{\mathrm{div}}r_7,
\qquad s_2\gets s_1\mathbin{\mathrm{div}}4,
\qquad s_3\gets s_2\mathbin{\mathrm{rem}}2.
\]
Indeed, this selects the bit at position $2$ of $T/2^{\nu_2(T)}$;
all positions occupied in $T$ are odd, and
\cref{cor:state-gap} identifies its next occupied position.
The additional length is $3$, or $2$ if $s_1$ has already been computed
for the prime-gap output. Finally, the twin-candidate count is computed
from $T$ by
\[
a_1\gets T\mathbin{\mathrm{div}}4,
\qquad a_2\gets\AND(T,a_1),
\qquad a_3\gets\HW(a_2),
\]
with length $1+\mathsf A+\mathsf H$.
The product in \cref{thm:twin-candidates} describes this output; it is
not an instruction in the program.

\subsection{A straight-line program for the prime-counting update}
\label{sec:slp-pi-update}

The recurrence of \cref{thm:pi-recurrence} has a fixed straight-line
program for its update, with inputs $(x,Z_x)$. The index $x$ is an input
to each update, whereas the stored state is the single integer $Z_x$.
Two simplifications reduce the number of operations while preserving
the recurrence exactly on its invariant states.

First, put $c=\nu_2(Z)$ and $W=(Z/2^c-1)/2$. On the invariant,
$q=\nu_2(W)$ is the least prime greater than $x$, so $q-x\geq1$ and
\begin{equation}
\label{eq:slp-pi-indicator}
\varepsilon=1\mathbin{\mathrm{div}}(q\dotminus x)
\end{equation}
is $1$ exactly when $q=x+1$, and is $0$ otherwise. Thus it equals the
indicator in \eqref{eq:pi-recurrence-indicator}. Second, writing
$P=\Phi(W)$, selection and re-encoding can be combined as
\begin{equation}
\label{eq:slp-pi-fused}
Z^+=Z+\varepsilon\bigl(2^c(4P+2)\dotminus Z\bigr).
\end{equation}
If $\varepsilon=0$, this returns $Z$; if $\varepsilon=1$, it returns
$2^{c+1}(2P+1)$, exactly as in
\eqref{eq:pi-recurrence-state}. Moreover, $P>W$ because the highest
occupied position increases strictly at every wheel update. Hence the
truncated subtraction in \eqref{eq:slp-pi-fused} is ordinary subtraction.

The complete program is
\begin{equation}
\label{eq:slp-pi-update}
\begin{array}{r@{\quad}l@{\qquad}r@{\quad}l}
(z_1,g_Z)&\gets\mathsf{V}^*(Z)&z_9&\gets\Phi(z_5;z_6,g_W)\\
z_2&\gets g_Z&z_{10}&\gets4z_9\\
z_3&\gets Z\mathbin{\mathrm{div}}z_2&z_{11}&\gets z_{10}+2\\
z_4&\gets z_3\dotminus1&z_{12}&\gets z_2z_{11}\\
z_5&\gets z_4\mathbin{\mathrm{div}}2&z_{13}&\gets z_{12}\dotminus Z\\
(z_6,g_W)&\gets\mathsf{V}^*(z_5)&z_{14}&\gets z_8z_{13}\\
z_7&\gets z_6\dotminus x&z_{15}&\gets Z+z_{14}\\
z_8&\gets1\mathbin{\mathrm{div}}z_7&&
\end{array}
\end{equation}
Again the left column precedes the right. Here $\Phi(z_5;z_6,g_W)$ means
execute registers $r_2,\ldots,r_{15}$ of
\eqref{eq:slp-sieve-update} with $T=z_5$ and the already available
$r_1=z_6$ and $g_T=g_W$. Thus the valuation of the decoded wheel is
computed only once, and both powers of two used for decoding and wheel
replication are retained from valuation calls. There are $11$ primitive
operations outside this invocation and $11$ inside it. The total length, after inlining the auxiliary programs,
is therefore
\begin{equation}
\label{eq:slp-pi-cost}
22+2\mathsf V+\mathsf L+\mathsf A=129.
\end{equation}
The output $z_{15}$ is $Z_{x+1}$. If $\pi(x+1)$ is also wanted as a
separate output, append $z_{16}\gets z_1+z_8$, at the cost of one
additional operation. Incrementing the external index $x$ is a separate
addition if it is included in the desired output.

\subsection{Straight-line programs for the packed sieve}
\label{sec:packed-slp}

The packed formulas of Section~\ref{sec:nth-term} can be evaluated with substantial reuse of their
intermediate values.  We give one common program whose cutoff is either
$c=n(n+1)$, for the $n$-th prime, or $c=n$, for the counting functions.
All registers below contain nonnegative integers.  An assignment or an
alias has no cost; each addition, truncated subtraction, multiplication,
floor division, exponentiation, and remainder has cost one.  We use the
straight-line subroutines for factorial, AND, binomial coefficient, and
valuation in Appendix~\ref{sec:slp-aux}, with costs $10$, $34$, $6$, and $19$, respectively.
The six-operation binomial program is used only with a positive top
argument.

Here is the common sieve program.  Each group displays its register
assignments in evaluation order; its cost counts all primitive operations
in the displayed right-hand sides.  Previously computed registers may be
used repeatedly without recomputation.  The symbols $G_0$ and $G_1$ need
not be called as subroutines.
\begin{center}
\small
\begin{tabular}{@{}clc@{}}
\toprule
Block & Assignments & Cost\\
\midrule
1 & $t\gets n+1,\quad u\gets t+1,\quad h\gets u^2$ & \\
  & $D\gets2^h,\quad H\gets2^t,\quad B\gets DH$ & \\
  & $k\gets c+1,\quad A\gets B^k,\quad E\gets A^t$ & $9$\\[1mm]
2 & $F\gets t!$ & $10$\\[1mm]
3 & $b\gets B-1,\quad a\gets A-1$ & \\
  & $U\gets\floor{a/b},\quad G\gets\floor{(E-1)/b}$ & $5$\\[1mm]
4 & $v\gets cB-k,\quad G^{\prime}\gets\floor{(Av+B)/b^2}$ & $6$\\[1mm]
5 & $e\gets E^2,\quad \rho\gets EF,\quad z\gets a^{\rho}\bmod e$ & \\
  & $R\gets\floor{(e+1-z)/E}-FA$ & $9$\\[1mm]
6 & $\kappa\gets\floor{(D+F-1)/F}$ & $3$\\[1mm]
7 & $j\gets H-1,\quad x\gets\kappa R G^{\prime},\quad y\gets DjG$ & \\
  & $Q\gets\floor{\AND(x,y)/D},\quad W\gets Q\bmod a$ & $7+34$\\[1mm]
8 & $v\gets((A-B+1)W)\bmod A,\quad B_2\gets B^2$ & \\
  & $J\gets B^t,\quad d\gets\floor{(J-B_2)/b},\quad\Delta\gets v-d$ & $9$\\[1mm]
9 & $x\gets\Delta+jU,\quad Y\gets HU$ & \\
  & $z\gets\floor{\AND(x,Y)/H},\quad P\gets U-1-B-z$ & $7+34$\\
\bottomrule
\end{tabular}
\end{center}

The ordinary operations in Blocks 1 and 3--9 total
\[
9+5+6+9+3+7+9+7=55.
\]
Thus the common program, with $n$ and $c$ supplied as inputs, has length
$55+10+2\cdot34=133$.  Its output is the packed prime-flag integer $P$;
the registers $U,Y,k,A,b,$ and $B_2$ are retained for subsequent use.

To identify this program with the formulas in Section~\ref{sec:nth-packed}, observe that
\[
U=G_0(B,c+1),\qquad G=G_0(B,(c+1)(n+1)),\qquad
G^{\prime}=G_1(B,c+1).
\]
The third identity follows from
\[
G_1(B,c+1)=\frac{A(cB-(c+1))+B}{(B-1)^2},
\qquad A=B^{c+1}.
\]
In addition, Block 8 reuses the equivalent self-divisor mask
\[
B^2G_0(B,n-1)=\frac{B^{n+1}-B^2}{B-1}.
\]
This quotient is exact and its numerator is nonnegative, including $n=1$.
All remaining identities follow directly from the defining packed
formulas.  Consequently the proofs in Section~\ref{sec:nth-term} establish
correctness and all domains of this straight-line program.

\paragraph{Prime counting.}
For the specialization $c=n$ of \cref{thm:pi-packed}, set $k\gets t$ in Block 1 and
$J\gets A$ in Block 8.  These two aliases replace two arithmetic
instructions, so the packed prime flags cost $131$ operations.  Append
\[
\widehat\pi(n)\gets P\bmod b.
\]
The resulting straight-line program for $\pi(n)$ has length $132$.

\paragraph{Twin-prime counting.}
Let $\pi_2(n)$ count prime pairs $(p,p+2)$ with $p+2\leq n$, as in
\cref{thm:twin-packed}. Use the same $131$-operation flag program and append
\[
x\gets\floor{P/B_2},\qquad y\gets\AND(P,x),\qquad
\widehat\pi_2(n)\gets y\bmod b.
\]
The register $B_2$ has already been computed.  These instructions cost
$1+34+1=36$, giving length $167$ for $\pi_2(n)$.

\paragraph{The $n$-th prime.}
For the specialization $c=n(n+1)$, compute $c\gets nt$ immediately after
$t\gets n+1$.  This adds one multiplication, so the flag program has
length $134$.  Append
\[
\begin{aligned}
S&\gets(PU)\bmod A, & r&\gets2H-n,\\
N&\gets S+rU, & K&\gets\binom{N}{Y},\\
v&\gets\nu_2(K), & \widehat p_n&\gets k-v.
\end{aligned}
\]
Here $N\geq Y\geq1$, so the six-operation binomial subroutine applies
and $K\geq1$ is a valid valuation input.  This final stage costs
\[
2+2+2+6+19+1=32.
\]
The resulting straight-line program for $p_n$ has length $166$.
These are explicit upper bounds furnished by the displayed programs;
no minimality is asserted.  They count arithmetic instructions, not bit
operations, and therefore do not claim practical efficiency.

\section{Practical evaluation code}
\label{app:source}

\subsection{Integer recurrence}
\label{app:python}

The following Python program implements the integer recurrence exactly.
The generator loop iterates a fixed update; it is not included in the
straight-line operation count.
For practical efficiency it uses the machine-level bitwise operation;
\cref{thm:and-term} gives an arithmetic representation if one does not wish
to admit AND as primitive. Here \texttt{T \& -T} isolates the lowest set
bit, so its bit length minus one is $\nu_2(T)$. The value
\texttt{T.bit\_length() - 2} gives $L$. All arithmetic is exact.

\begin{quote}
\begin{verbatim}
def odd_primes():
    T = 8
    while True:
        p = (T & -T).bit_length() - 1
        yield p
        L = T.bit_length() - 2
        M = (1 << (L * p)) - 1
        U = T * (M // ((1 << L) - 1))
        V = M // ((1 << p) - 1)
        T = U - (U & V)

gen = odd_primes()
print([next(gen) for _ in range(7)])
\end{verbatim}
\end{quote}

Running the program prints
\[
[3, 5, 7, 11, 13, 17, 19],
\]
that is $p_2$ through $p_8$. The function yields each prime before
constructing the next, larger state. The prime $2$ may simply be prefixed to
the output.

The following companion function evaluates the one-integer recurrence of
\cref{thm:pi-recurrence}.  It keeps the ordinary index $x$ externally, while
$\nu_2(Z)$ is the corresponding value of $\pi(x)$.

\begin{quote}
\begin{verbatim}
def nu2(t):
    return (t & -t).bit_length() - 1

def wheel_step(T):
    p = nu2(T)
    L = T.bit_length() - 2
    M = (1 << (L*p)) - 1
    U = T * (M // ((1 << L) - 1))
    V = M // ((1 << p) - 1)
    return U - (U & V)

def pi_recurrence(limit):
    Z = 34
    out = []
    for x in range(2, limit + 1):
        out.append((x, nu2(Z)))
        c = nu2(Z)
        W = ((Z >> c) - 1) // 2
        eps = int(x + 1 == nu2(W))
        if eps:
            W = wheel_step(W)
        Z = (1 << (c + eps)) * (2*W + 1)
    return out

print(pi_recurrence(10))
\end{verbatim}
\end{quote}

The second coordinates are
\[
1,2,2,3,3,4,4,4,4,
\]
which are $\pi(2),\ldots,\pi(10)$.

\subsection{Packed program for the \texorpdfstring{$n$-th}{n-th} prime}
\label{app:nth}

The following Python program implements $\widehat{p}_n$ from the formulas in
Section~\ref{sec:nth-term}. As in the recurrence, it uses the machine
bitwise operation rather than \cref{thm:and-term}. The power in
\eqref{eq:nth-C} is reduced modulo $E_n^2$. The binomial coefficient in
\eqref{eq:nth-answer} is not built: its $2$-adic valuation is the
difference of Hamming weights from the proof of \cref{lem:nth-carries},
and \texttt{bit\_count} supplies $\HW$. These are evaluation
shortcuts, not extra primitives in the reported operation counts.
In particular, three-argument \texttt{pow} avoids constructing the full
power that is one intermediate value of the literal straight-line
program. The value $G_0(x,0)$ is $0$.

\begin{quote}
\begin{verbatim}
def factorial(n):
    f = 1
    for i in range(2, n + 1):
        f *= i
    return f

def G0(x, k):
    if k == 0:
        return 0
    return (x**k - 1) // (x - 1)

def G1(x, m):
    # m is the number of terms; m >= 1
    z = x**m
    num = ((m - 1)*x*z + x) - m*z
    return num // (x - 1)**2

def nu2_binom(upper, lower):
    other = upper - lower
    return (lower.bit_count()
            + other.bit_count()
            - upper.bit_count())

def nth_prime(n):
    C = n*(n + 1)
    F = factorial(n + 1)
    h = (n + 2)**2
    D = 1 << h
    H = 1 << (n + 1)
    B = D*H
    A = B**(C + 1)
    E = A**(n + 1)
    U = G0(B, C + 1)
    mod = E*E
    R = (mod + 1 - pow(A - 1, E*F, mod))//E - F*A
    kappa = (D + F - 1)//F
    mask = D*(H - 1)*G0(B, (C + 1)*(n + 1))
    Q = ((kappa*R*G1(B, C + 1)) & mask)//D
    W = Q % (A - 1)
    Delta = ((A - B + 1)*W) % A - B*B*G0(B, n - 1)
    P = U - 1 - B - (((Delta + (H - 1)*U) & (H*U))//H)
    S = (P*U) % A
    val = nu2_binom(S + (2*H - n)*U, H*U)
    return C + 1 - val

print([nth_prime(n) for n in range(1, 7)])
\end{verbatim}
\end{quote}

Running the program prints
\[
[2, 3, 5, 7, 11, 13],
\]
that is $p_1$ through $p_6$. Modular exponentiation dominates the
running time even for these small inputs.

\paragraph{AI assistance.}
The mathematical ideas are the authors'. OpenAI's GPT~6 Astra was used to
assist in writing proofs and preparing the manuscript.

\small
\setlength{\bibsep}{0.25em}

\end{document}